\patchcmd{\thebibliography}{\leftmargin\labelwidth}{\leftmargin\labelwidth\addtolength\itemsep{-0.1\baselineskip}}{}{}
\newtheorem{theorem}{Theorem}
\newtheorem{lemma}[theorem]{Lemma}
\newtheorem{observation}[theorem]{Observation}
\newtheorem{proposition}[theorem]{Proposition}
\newtheorem*{claim}{Claim}
\newtheorem{question}{Question}
\newcommand*{\eqdef}{\stackrel{\mbox{\normalfont\tiny def}}{=}} 
\newcommand*{\veps}{\varepsilon}                                
\DeclarePairedDelimiter\abs{\lvert}{\rvert}                     
\newcommand*{\p}{\mathbb{P}}
\newcommand*{\E}{\mathbb{E}}                                    
\newcommand*{\Z}{\mathbb{Z}}                                    
\newcommand*{\R}{\mathbb{R}}                                    
\newcommand*{\cP}{\mathcal{P}}
\newcommand*{\cN}{\mathcal{N}}
\newcommand*{\bC}{\bm{C}}
\newcommand*{\bF}{\bm{F}}
\newcommand*{\cF}{\mathcal{F}}
\newcommand*{\cO}{\mathcal{O}}
\newcommand*{\cC}{\mathcal{C}}
\newcommand*{\wu}{\widetilde{u}}
\newcommand*{\wv}{\widetilde{v}}
\DeclareMathOperator{\conv}{conv}
\DeclareMathOperator{\diam}{diam}
\newcommand*{\ipi}{\pi^{-1}}
\DeclareMathOperator{\spa}{span}
\DeclareMathOperator{\str}{stret}                               
\DeclareMathOperator{\vol}{vol}
\DeclareMathOperator{\area}{area}
\DeclareMathOperator{\covol}{covol}                             
\DeclareMathOperator{\SO}{SO}
\crefname{enumi}{step}{steps}
\crefname{part}{part}{parts}
\begin{document}

\begin{frontmatter}[classification=text]

\title{Convex Polytopes in Restricted Point Sets in $\R^d$} 

\author[bukh]{Boris Bukh\thanks{Supported in part by Simons Foundation Fellowship and U.S. taxpayers through NSF CAREER grant DMS-1555149.}}
\author[dong]{Zichao Dong\thanks{Supported by the Institute for Basic Science (IBS-R029-C4).}}

\begin{abstract}
    For any finite point set $P \subset \R^d$, denote by $\diam(P)$ the ratio of the largest to the smallest distances between pairs of points in $P$. Let $c_{d, \alpha}(n)$ be the largest integer $c$ such that any $n$-point set $P \subset \R^d$ in general position, satisfying $\diam(P) < \alpha\sqrt[d]{n}$, contains a $c$-point convex independent subset. We determine the asymptotics of $c_{d, \alpha}(n)$ as $n \to \infty$ by showing for $\alpha \ge 2$ the existence of positive constants $\beta = \beta(d, \alpha)$ and $\gamma = \gamma(d)$ such that 
    \[
    \beta n^{\frac{d-1}{d+1}} \le c_{d, \alpha}(n) \le \gamma n^{\frac{d-1}{d+1}}. 
    \]
\end{abstract}
\end{frontmatter}

\section{Introduction} \label{sec:intro}
	
	\paragraph{Background.} A point set $P \subset \R^d$ is \emph{in general position} if no $d+1$ points from $P$ lie on the same $(d-1)$-dimensional hyperplane. A point set $Q$ is called \emph{convex independent} if $Q$ is in general position and the points from $Q$ are the vertices of a convex polytope. 
	
	The following natural question was asked by Esther Klein (later Mrs.~Szekeres): 
	
	\begin{question} \label{ques:es}
		Given a positive integer $n$, let $N = N(n)$ be the smallest positive integer such that every $N$-point set $P$ in general position in the plane contains an $n$-point convex independent set. Does $N$ exist? If so, how big is $N(n)$? 
	\end{question}
	
	In \cite{erdos_szekeres}, Erd\H{o}s and Szekeres proved the existence of $N$ by showing $N(n) \le \binom{2n-4}{n-2}+1$ via what is now well-known as the \emph{cup-cap argument}. They also proved in the same paper that $N(n) \ge 2^{n-2}+1$ and conjectured that this is sharp. This conjecture remains unsolved, with the best known upper bound $N(n) \le 2^{n(1+o(1))}$ due to Suk in \cite{suk}. Various generalizations and variations on \Cref{ques:es} have been studied over the years. For an extensive survey on the Erd\H{o}s--Szekeres problem, see \cite{morris_soltan}. A recent notable result is due to Pohoata and Zakharov \cite{pohoata_zakharov}, showing a bound of the form $2^{o(n)}$ for the analogous problem in $3$ dimensions.
	
	Define the \emph{normalized diameter} of a finite set $P\subset \R^d$ as
	\[
	\diam(P) \eqdef \frac{\max\{|a-b|: a, b \in P, \, a \neq b\}}{\min\{|a-b|: a, b \in P, \, a \neq b\}}, 
	\]
	where $|a-b|$ denotes the Euclidean distance between points $a$ and $b$ in $\R^d$. In this paper we study the following variant of \Cref{ques:es}: 
	
	\begin{question} \label{ques:res_es}
		Given a positive integer $n$, let $c = c_{d, \alpha}(n)$ be the biggest positive integer such that every $n$-point set $P$ in general position in $\R^d$ with $\diam(P) \le \alpha\sqrt[d]{n}$ contains a $c$-point convex independent set. What is the order of $c_{d, \alpha}(n)$ as $n$ goes to infinity? 
	\end{question}
	
	The results of Erd\H{o}s, Szekeres and Suk mentioned above imply that
	\[
	\lim_{\alpha \to \infty} c_{2,\alpha}(n) = \bigl(1 + o(1)\bigr) \log_2 n. 
	\]
	
	The $\alpha\sqrt[d]{n}$ upper bound on $\diam(P)$ is natural. To see this, we assume without loss of generality that the minimum distance between a pair of distinct points in $P$ is $1$. Place an open ball $U_i$ of radius $\frac{1}{2}$ centered at $p_i$ for each $p_i \in P$. Then the assumption on the minimum pairwise distance implies that $U_1, \dots, U_n$ are pairwise disjoint, and are contained in the big open ball $U$ of radius $\diam(P)+\frac{1}{2}$ centered at an arbitrary fixed point $p \in P$. Thus, a comparison of volumes shows that
	\[
	n \cdot \Omega \biggl( \Bigl(\frac{1}{2}\Bigr)^d \biggr) = \sum_{i=1}^n \vol(U_i) \le \vol(U) = O \biggl( \Bigl( \diam(P) + \frac{1}{2} \Bigr)^d \biggr), 
	\]
	and hence $\diam(P) = \Omega\bigl(\sqrt[d]{n}\bigr)$. 
	
	\Cref{ques:res_es} was first studied in the plane by Alon, Katchalski, and Pulleyblank in \cite{alon_katchalski_pulleyblank}. They proved that there exists an absolute constant $\beta = \beta(\alpha) > 0$ such that $\beta n^{\frac{1}{4}} \le c_{2, \alpha}(n) \le n^{\frac{1}{3}+o(1)}$ when $\alpha \ge 4$. Later on, both bounds were improved by Valtr in \cite{valtr92}. He showed that there exist absolute constants $\beta = \beta(\alpha) > 0$ and $\gamma > 0$ such that $\beta n^{\frac{1}{3}} \le c_{2, \alpha}(n) \le \gamma n^{\frac{1}{3}}$ when $\alpha \ge 1.06$. 
	
	The algorithmic version of the problem in $\R^3$ was studied contemporaneously with the present paper by Dumitrescu and T\'oth \cite{dumitrescu_toth}.  They also independently showed that
	$c_{3,\alpha}=\Theta_{\alpha}(n^{1/2})$.
	
	\paragraph{Results and paper organization.} Our main result is that $\beta n^{\frac{d-1}{d+1}} \le c_{d, \alpha}(n) \le \gamma n^{\frac{d-1}{d+1}}$ holds for each $\alpha \ge 2$, where $\beta = \beta(d,\alpha) > 0$ and $\gamma = \gamma(d) > 0$ are absolute constants. This estimate determines the exact order of $c_{d, \alpha}(n)$, and hence generalizes the results in \cite{valtr92}. To be specific, we shall prove the following pair of theorems: 
	
	\begin{theorem} \label{thm:lowerbound}
		For any dimension $d \ge 2$ and any $\alpha > 0$, there exists a constant $\beta = \beta(d,\alpha) > 0$ such that any $n$-point set $P \subset \R^d$ in general position with $\diam(P) < \alpha \sqrt[d]{n}$ contains a convex independent subset $Q$ satisfying~$|Q| \ge \beta n^{\frac{d-1}{d+1}}$. 
	\end{theorem}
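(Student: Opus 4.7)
My plan is to prove the theorem by induction on the dimension $d$, with the base case $d = 2$ covered by Valtr's result in \cite{valtr92}. Throughout, I normalize so that the minimum pairwise distance in $P$ is at least $1$; the hypothesis $\diam(P) < \alpha n^{1/d}$ then forces $P$ into a box of side $O(n^{1/d})$ and volume $O(n)$. The main idea for the inductive step is a careful slab decomposition combined with a combining argument across slabs.

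Fix a direction $v$ and a width parameter $w$ to be chosen later. Partition the bounding box of $P$ into $T = \Theta(n^{1/d}/w)$ parallel slabs of width $w$ perpendicular to $v$. Since the points of $P$ are pairwise at distance at least $1$, a volume comparison shows that every slab contains $O\bigl(w\,n^{(d-1)/d}\bigr)$ points, and by pigeonhole many slabs attain $\Omega\bigl(w\,n^{(d-1)/d}\bigr)$ points. Within a rich slab, orthogonal projection to $\R^{d-1}$ produces a point set to which the inductive hypothesis can be applied, after a modest clean-up to restore a uniform lower bound on pairwise distances (losing only a constant factor in the number of points). Induction then supplies a large convex independent subset of the projection, which lifts to a convex independent subset inside the slab.

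The main technical challenge is to combine the $(d-1)$-dimensional convex independent subsets from different slabs into a single convex polytope in $\R^d$. The naive union fails, since two slabs can contribute points whose projections overlap, and whose lifts may then end up in the interior of the combined convex hull. What I would need is to select inside each slab a nested or monotone sub-family, so that the lifts across slabs stack into a single pyramidal, cup-like convex surface in $\R^d$. I expect this to be the main obstacle: it likely requires an Erd\H{o}s--Szekeres- or Dilworth-type argument on a suitable combinatorial invariant of each slab's convex set. Finally, balancing the width $w$, the number $T$ of contributing slabs, and the per-slab contribution $\Omega\bigl((w\,n^{(d-1)/d})^{(d-2)/d}\bigr)$ coming from the induction should yield the desired bound $\Omega\bigl(n^{(d-1)/(d+1)}\bigr)$; the fact that this exponent matches the known Andrews-type upper bound is strong evidence that such a slab-and-lift strategy is of the right shape.
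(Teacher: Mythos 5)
Your plan is an honest outline, but it has a genuine gap exactly where you say it does: the step of combining the $(d-1)$-dimensional convex independent sets from different slabs into one convex independent set in $\R^d$ is not just a ``technical challenge,'' it is the entire content of the theorem in your scheme, and you have not supplied an argument for it. As written, the proposal stops at a description of what would be needed (``a nested or monotone sub-family'' that ``stacks into a cup-like convex surface'') without proving that such a sub-family of the right size exists, or even giving a precise invariant to apply an Erd\H{o}s--Szekeres/Dilworth argument to. Note also that there is no reason the per-slab sets produced by the inductive hypothesis should be compatible across slabs: the inductive statement gives you \emph{some} large convex independent set in each slab's projection, with no control over which one, so to stack you would likely need a strengthened induction hypothesis (something like ``many disjoint caps of prescribed direction'') rather than the theorem itself. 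Finally, your heuristic appeal to the exponent matching the Andrews-type upper bound says nothing about whether this particular combining scheme can be made to work.

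The paper avoids this combining problem entirely by never passing to lower dimensions. It fixes a ball $B_r$ of radius $r=\alpha n^{1/d}$ and packs $\Theta_d(n^{(d-1)/(d+1)})$ pairwise disjoint spherical caps (``ball slices'') of height $h=\alpha n^{(1-d)/(d(d+1))}$ into it. Any choice of one point from each nonempty cap is automatically convex independent, because each cap is cut off from the ball by a hyperplane that separates it from all the others; this makes the combining free. The remaining work is to show that for a random rigid motion of the ball, a constant fraction of the caps contain a point of $P$, which is done by a second-moment argument: the volume of a cap is $\Theta_{d,\alpha}(1)$ so $\E|(z+\rho\hat S)\cap P|=\Theta(1)$, and a slab-intersection bound shows $\E|(z+\rho\hat S)\cap P|^2=O(1)$, whence $\p((z+\rho\hat S)\cap P\neq\varnothing)=\Omega(1)$ by Cauchy--Schwarz. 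If you want to salvage the slab-and-lift approach you would need, at minimum, to prove a directional strengthening of the lower bound in dimension $d-1$ and then give a concrete cross-slab stacking argument; but the spherical-cap packing is cleaner and does not require induction on $d$ at all.
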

	
	\begin{theorem} \label{thm:upperbound}
		For any dimension $d \ge 2$, there exists a constant $\gamma = \gamma(d) > 0$ such that for any positive integer $n$ there exists an $n$-point set $P \subset \R^d$ in general position with $\diam(P) \le 2 \sqrt[d]{n}$ containing no convex independent subset of size~$\gamma n^{\frac{d-1}{d+1}}$. 
	\end{theorem}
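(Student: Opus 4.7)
My approach is to take $P$ as a slight nonlinear perturbation of a dense-lattice subset restricted to a Euclidean ball, and to bound the largest convex independent subset by reducing to Andrews' theorem on convex lattice polytopes. The construction has to be done with some care because lattices are never in general position (they contain arbitrarily long collinear strings); this is handled by a ``stretch map'' that breaks coplanarities while leaving the metric almost unchanged.

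\emph{Construction.} Fix a full-rank lattice $\Lambda \subset \R^d$ whose shortest nonzero vector has length at least $1$ and whose covolume is at most $v_d/2$ (such $\Lambda$ exists for every $d \ge 2$ by Minkowski--Hlawka; for $d \le 4$ one may take $\Lambda = \Z^d$ with a cube in place of the ball). Then $|\Lambda \cap B(0, n^{1/d})| \ge 2n - o(n)$, so there is a radius $R$ slightly below $n^{1/d}$ with $|\Lambda \cap B(0, R)| \ge n$ and enough slack to absorb the perturbation. Pick any $n$-point subset $S \subseteq \Lambda \cap B(0, R)$, and define $\phi(x) = x + \veps \cdot q(x)$ for a small $\veps = \veps(n,d) > 0$ and a generic low-degree polynomial vector field $q : \R^d \to \R^d$. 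For $\veps$ small, $\phi$ is a diffeomorphism onto its image, $\phi(S) \subset B(0, n^{1/d})$, all pairwise distances in $\phi(S)$ lie within $1 \pm O(\veps)$ of those in $S$, and no $d+1$ points of $\phi(S)$ are coplanar. Set $P = \phi(S)$; then $\diam(P) \le 2 n^{1/d}$ by direct computation.

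\emph{Bounding convex independent subsets.} Suppose $Q \subseteq P$ is convex independent with $|Q| = N$, and let $Q' = \phi^{-1}(Q) \subseteq S$. For each $q_i \in Q$, convex independence supplies a unit direction $u_i$ with $\langle u_i, q_i\rangle > \langle u_i, q_j\rangle$ for $j \ne i$; pulling back through $\phi$ shows that every $q'_i$ is within $O(\veps \cdot n^{1/d})$ of a supporting hyperplane of $\conv(Q')$, i.e.\ is \emph{$\delta$-extreme} with $\delta = O(\veps \cdot n^{1/d})$. Invoking an extension of Andrews' theorem to $\delta$-extreme lattice points---namely that for $\delta$ small enough relative to the lattice, the number of $\delta$-extreme lattice points of $\Lambda$ in a convex body $K$ is $O_d((\vol K / \covol \Lambda)^{(d-1)/(d+1)})$---and using $\vol(\conv Q') \le v_d (R+\veps)^d = O_d(n)$, we obtain
\[
N \le c_d \Bigl(\frac{\vol(\conv Q')}{\covol \Lambda}\Bigr)^{(d-1)/(d+1)} = O_d\bigl(n^{(d-1)/(d+1)}\bigr),
\]
as required.

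\emph{Main obstacle.} The proof has two delicate points. First, the stretch $\phi$ must be chosen so that no $(d+1)$-tuple from $S$ becomes coplanar after applying $\phi$; coplanarity is an algebraic condition in the coefficients of $q$ and $\veps$, so a generic (Zariski-dense) choice avoids the finitely many bad cases coming from $(d+1)$-tuples in $S$. Second and more substantively, one must establish the $\delta$-extreme variant of Andrews' theorem; this is where the core technical work sits. I would prove it by a Macbeath-cap covering of the boundary of $K$: around each $\delta$-extreme lattice point one assigns a Macbeath region of volume bounded below by a function of $\covol \Lambda$ and $\delta$, the regions have bounded overlap multiplicity, and summing their volumes inside $K$ yields the desired bound after optimization---provided $\delta$ is kept below a threshold depending only on $d$ and $\Lambda$, which is the role of taking $\veps$ sufficiently small at the outset.
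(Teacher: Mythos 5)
The argument breaks at two points, and both are fatal.

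\textbf{The ``$\delta$-extreme variant of Andrews' theorem'' is false, even at $\delta=0$.} Take $K=[0,m]^d$ and $\Lambda=\Z^d$. The boundary of $K$ contains $\Theta_d(m^{d-1})$ lattice points, each at distance $0$ from a supporting hyperplane, while your claimed bound gives $O_d\bigl((m^d)^{(d-1)/(d+1)}\bigr)=O_d\bigl(m^{d(d-1)/(d+1)}\bigr)$, and $d-1>d(d-1)/(d+1)$ for all $d\ge 2$. So no threshold on $\delta$ ``small relative to the lattice'' can rescue the statement. The reason the Macbeath-cap argument you sketch does not survive the extension is precisely the step ``volume bounded below'': for a genuine vertex of a lattice polytope, a cap at that vertex must contain a full-dimensional lattice simplex and so has volume $\gtrsim\covol\Lambda$, but for a non-vertex boundary lattice point the cap is a thin slab whose volume degenerates with $\delta$, and summing those volumes gives a bound that blows up as $\delta\to 0$ rather than one uniform in small $\delta$. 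Andrews/B\'ar\'any--Vershik is a statement about \emph{vertices of a lattice polytope}, and your pulled-back set $Q'=\phi^{-1}(Q)$ need not be anywhere near the vertex set of $\conv(Q')$; it can contain almost the entire relative interior of a facet.

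\textbf{A generic low-degree polynomial perturbation is the wrong choice, and the construction itself fails already for $d=2$.} Under $\phi(x)=x+\veps q(x)$ with $q$ a generic quadratic, the $\Theta(n^{1/d})$ lattice points on a line segment inside $B(0,n^{1/d})$ are sent to points on a conic-like curve, which for generic coefficients is strictly convex; hence \emph{all} of them become convex independent. For $d=2$ this already gives a convex independent subset of size $\Theta(n^{1/2})\gg n^{1/3}$, contradicting the bound you want. The same phenomenon on facets of $\conv(S)$ afflicts higher $d$. This is exactly the pitfall the paper's construction is engineered around: the perturbation there is not generic but a recursively built \emph{oscillator}, whose defining feature (Lemma~\ref{lem:line_bound}) is that any convex independent subset meets a lattice line segment $L$ in only $O_d(\log|L|)$ points; this logarithmic bound, not an Andrews-type count, is what is then amplified across facets (via a discrete John-type theorem) and summed over facets (via surface-area and normal-vector counting plus H\"older) to reach $O_d\bigl(n^{d(d-1)/(d+1)}\bigr)$ for the grid $[n]^d$. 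Your ``generic'' perturbation is essentially the opposite of what is needed, and the ``extra structure'' of $Q'$ that you would have to exploit (namely that $Q=\phi(Q')$ is convex independent, not merely that $Q'$ is near $\partial\conv(Q')$) is used nowhere in your argument.
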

	
	We shall prove \Cref{thm:lowerbound} in \Cref{sec:prf_lower}. The proof is an easy generalization of Valtr's proof \cite{valtr92} of the lower bound on~$c_{2,\alpha}(n)$. 
	
	The proof of the upper bound (\Cref{thm:upperbound}) is more interesting. Horton \cite{horton} constructed arbitrarily large sets in $\R^2$ that contain no $7$-holes, i.e., vertices of a convex $7$-gon containing no other points from the point set. Valtr's construction behind the upper bound on $c_{2,\alpha}(n)$, a perturbed $\sqrt{n} \times \sqrt{n}$ grid, was inspired by that of Horton. Valtr's sets also contain no $7$-holes, while they further satisfy $\diam(P) \le \alpha\sqrt{|P|}$ (whereas the normalized diameter of Horton's sets is superpolynomial \cite{barba}). 
	
	We shall generalize Valtr's construction to higher dimensions. To achieve this, we shall generalize planar Horton sets to higher dimensional \emph{oscillators} in \Cref{sec:osci}. To analyze the behavior of oscillators, we shall generalize the \emph{convex cups} and \emph{convex caps} defined in \cite{erdos_szekeres} to higher dimensions in \Cref{sec:cupcap,sec:osci}. Finally, we shall prove \Cref{thm:upperbound} in \Cref{sec:prf_upper}. 
	
	\smallskip
	
	Another work related to \Cref{ques:res_es} is that of Conlon and Lim \cite{conlon_lim}. They proved that there exist arbitrarily small perturbations of the grid $[n]^d$ that are $d^{O(d^3)}$-hole-free, where $[n] \eqdef \{1, 2,\dotsc, n\}$. They too perturb the grid by Horton-like sets, and much of the work lies in generalizing Horton sets to higher dimensions. At the end of \cite{conlon_lim}, Conlon and Lim proposed the question of whether their constructions can also be used to generalize Valtr's upper bound on $c_{2,\alpha}(n)$ derived in \cite{valtr92} to higher dimensions. 
	Though we establish the upper bound on $c_{d, \alpha}(n)$ via Horton-like perturbations of the grid, our perturbations are different
	from those in \cite{conlon_lim}.

	\section{Proof of \texorpdfstring{\Cref{thm:lowerbound}}{Theorem 1}} \label{sec:prf_lower}
	
	Recall that $P \subset \R^d$ is in general position satisfying $|P| = n$ and $\diam(P) < \alpha\sqrt[d]{n}$.  We begin the proof of \Cref{thm:lowerbound} by introducing
	several objects related to~$P$.

    \begin{center}
		\begin{tikzpicture}[scale = 0.6]
			\node at (0, -1.8) {\textbf{\hypertarget{figone}{Figure 1}:} The ball slice $S_v$ (in gray) in $\R^2$.};
			\clip (-8, -1.1) rectangle (8, 6.6);
			\draw [shift={(0.,0.)},line width=1pt,fill=gray]  plot[domain=0.8047677837348086:2.3368248698549845,variable=\t]({1.*5.*cos(\t r)+0.*5.*sin(\t r)},{0.*5.*cos(\t r)+1.*5.*sin(\t r)}) -- cycle;
			
			\draw [thin] (-3.5,3.6) -- (-5.3,3.6);
			\draw [thin, <->] (-5.1,3.6) -- (-5.1,5);
			\draw [thin] (-5.3,5) -- (0,5);
			\node at (-5.3, 4.35) {\scriptsize $h$};
			\draw [line width=0.8pt, dash pattern=on 4pt off 4pt] (0, 0) circle (5);
			\draw [fill=black] (0, 0) circle (2pt);
			\draw [fill=black] (0, 5) circle (2pt);
			\draw [thin,->] (0, 0) -- (4, 3);
			\node at (0, -0.4) {\scriptsize origin};
			\node at (0, 5.25) {\scriptsize $v$};
			\node at (1.9, 1.65) {\scriptsize $r$};
			\draw[thin] (3.4663930246614556, 3.603348359315042) -- (3.4663930246614556, 6.4);
			\draw[thin] (-3.4663930246614556, 3.603348359315042) -- (-3.4663930246614556, 6.4);
			\draw[thin,<->] (3.4663930246614556, 6.2) -- (-3.4663930246614556, 6.2);
			\node at (0, 6.42) {\scriptsize $2s$};
		\end{tikzpicture}
	\end{center}
    
	Fix parameters $r \eqdef \alpha \cdot n^{\frac{1}{d}}$ and $h \eqdef \alpha \cdot n^{\frac{1-d}{d(d+1)}}$. For $R > 0$, denote by $B_R$ the open ball centered at the origin of radius $R$, and by $\partial B_R$ the boundary of $B_R$.
	The \emph{ball slice} in $B_r$ with apex $v \in \partial B_r$ is $S_v \eqdef \bigl\{x \in B_r : \langle x, v \rangle > r(r-h)\bigr\}$, where $\langle \bullet, \bullet \rangle$ denotes the standard Euclidean inner product on $\R^d$. The boundary of $S_v$ is the union of a spherical cap centered at $v$ and a $(d-1)$\nobreakdash-dimensional round disk of radius $s \eqdef \sqrt{r^2 - (r-h)^2} = \sqrt{2rh} \cdot \bigl(1-o(1)\bigr)$. The distance between $v$ and any point on the boundary of the aforementioned round disk is $\sqrt{s^2 + h^2} = \sqrt{2rh}$. See \hyperlink{figone}{Figure 1} for an illustration. 
	
	We record some geometric relations involving ball slices. They are clear from \hyperlink{figtwo_dup}{Figure 2}.
	
	\begin{observation} \label{obs:slice_sandwich}
		For any point $v \in \partial B_r$, we have the following containments involving $S_v$: 
		\begin{enumerate}[label=(\roman*), ref=(\roman*)]
			\item \label{subobs:ball}  The ball slice $S_v$ is contained in the closed ball of radius $\sqrt{2rh}$ centered at $v$.
			\item \label{subobs:cylinder} The ball slice $S_v$ is contained in the cylinder sharing the same base disk with $S_v$ of height $h$.
			\item \label{subobs:cone} The ball slice $S_v$ contains the cone with the same base disk and of the same height $h$.
		\end{enumerate}
	\end{observation}
	
	\begin{center}
		\def\slicecommonpart{
			\clip (-5.5, -1.3) rectangle (5.5, 9);
			\draw [shift={(0.,0.)},line width=0.3pt,fill=gray]  plot[domain=0.8047677837348086:2.3368248698549845,variable=\t]({1.*5.*cos(\t r)+0.*5.*sin(\t r)},{0.*5.*cos(\t r)+1.*5.*sin(\t r)}) -- cycle;
			\draw [fill=black] (0, 5) circle (2pt);
			\node at (0, 5.25) {\scriptsize $v$};
			\draw[thin] (3.4663930246614556, 3.603348359315042) -- (3.4663930246614556, 0);
			\draw[thin] (-3.4663930246614556, 3.603348359315042) -- (-3.4663930246614556, 0);
			\draw[thin,<->] (3.4663930246614556, 0.2) -- (-3.4663930246614556, 0.2);
			\node at (0, -0.025) {\scriptsize $2s$};
		}
		\begin{tikzpicture}[scale = 0.55]
			\begin{scope}
				\slicecommonpart
				\draw [thin] (-3.5,3.6) -- (-5.3,3.6);
				\draw [thin, <->] (-5.1,3.6) -- (-5.1,5);
				\draw [thin] (-5.3,5) -- (0,5);
				\draw[thin, ->] (0, 5) -- (2.99, 7.24);
				\node at (1.1, 6.5) {\scriptsize $\sqrt{2rh}$};
				\node at (-5.3, 4.35) {\scriptsize $h$};
				\draw [line width=1pt] (0, 5) circle (3.73718);
				\node at (0,-1) {a) Ball};
			\end{scope}
			\begin{scope}[xshift=8.5cm]
				\slicecommonpart
				\draw [line width=1pt] (-3.4663930246614556, 3.603348359315042) -- (-3.4663930246614556, 5) -- (3.4663930246614556, 5) -- (3.4663930246614556, 3.603348359315042) -- cycle;
				\node at (0,-1) {b) Cylinder};
			\end{scope}
			\begin{scope}[xshift=17cm]
				\slicecommonpart
				\draw [line width=1pt,line join=bevel] (-3.4663930246614556, 3.603348359315042) -- (0, 5) -- (3.4663930246614556, 3.603348359315042) -- cycle;
				\node at (0,-1) {c) Cone};
			\end{scope}
			\node at (8.5, -2.1) {\textbf{\hypertarget{figtwo_dup}{Figure 2}:} Two supersets and one subset of $S_v$.};
		\end{tikzpicture}
	\end{center}
	
	\vspace{-1em}
	Set $\hat{S} \eqdef S_{re_1} - re_1$, the translate of the ball slice $S_{re_1}$ by vector $-re_1$, where $e_1 \eqdef (1, 0, \dotsc, 0)$. Then $\hat{S} \subset B_{\sqrt{2rh}} \subset B_{2s}$, thanks to \Cref{obs:slice_sandwich}\ref{subobs:ball}. Assume without loss of generality that the origin lies in $P$, and that the smallest distance between a pair of distinct points in $P$ is $1$. Then $P \subset B_r$.
	
	\smallskip
	
	The proof strategy is this: First, we prove that $B_r$ contains many disjoint ball slices. We then consider a randomly translated and rotated copy of $B_r$, and argue that each ball slice therein is likely to contain a point of~$P$. Furthermore, a second-moment argument shows that it is likely that a positive proportion of these ball slices intersects $P$. We conclude by proving that any transversal (i.e., one point from each intersection of the ball slices and $P$) forms a convex independent set.
	
	\begin{proposition} \label{prop:slice_packing}
		Suppose $d \ge 2$. In $B_r$ there exist $cn^{\frac{d-1}{d+1}}$ disjoint ball slices, where $c = c(d) > 0$.
	\end{proposition}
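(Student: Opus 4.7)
The plan is to reduce the question to a standard packing problem on the unit sphere $\partial B_1$. Each ball slice $S_v$ is a spherical cap cut off from $B_r$ by the hyperplane $\{x : \langle x, u \rangle = r-h\}$, where $u \eqdef v/r$ lies on $\partial B_1$. My first step is to identify the precise angular threshold at which two slices $S_{ru}$ and $S_{ru'}$ become disjoint. By symmetry, the supremum of $\min\bigl(\langle x, u\rangle, \langle x, u'\rangle\bigr)$ over $x \in \overline{B_r}$ is attained at the bisector point $x_\ast \eqdef r(u+u')/\lvert u+u' \rvert$, and a direct computation gives the value $r\sqrt{(1 + \langle u, u' \rangle)/2}$. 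Hence $S_{ru} \cap S_{ru'} = \emptyset$ is equivalent to $\langle u, u' \rangle \le 1 - 4h/r + 2(h/r)^2$. Writing $\langle u, u' \rangle = 1 - 2\sin^2(\theta/2)$ with $\theta$ the angle between $u$ and $u'$, this becomes $\sin(\theta/2) \ge \theta_0$ where $\theta_0 \eqdef \sqrt{2h/r - (h/r)^2} = \Theta\bigl(\sqrt{h/r}\bigr)$.

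The next step is a standard sphere-packing argument on $\partial B_1 \subset \R^d$. A spherical cap of angular radius $\rho$ has $(d-1)$-dimensional surface measure of order $\Theta(\rho^{d-1})$ for small $\rho$, so a greedy maximal packing with angular separation $2\arcsin(\theta_0) \asymp \sqrt{h/r}$ produces $\Omega\bigl((r/h)^{(d-1)/2}\bigr)$ unit vectors. The associated ball slices are pairwise disjoint by the previous step. Finally, substituting $h = \alpha n^{(1-d)/(d(d+1))}$ and $r = \alpha n^{1/d}$ gives $r/h = n^{2/(d+1)}$, whence $(r/h)^{(d-1)/2} = n^{(d-1)/(d+1)}$, which is the required lower bound.

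The main obstacle is the first step: pinning down the explicit angular threshold $\Theta(\sqrt{h/r})$ for disjointness. After the symmetry reduction to the bisector point $x_\ast$, this boils down to a short algebraic calculation, and then the remainder of the argument follows from the standard sphere-packing estimate together with a routine arithmetic check of the exponents.
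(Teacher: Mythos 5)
Your proof is correct and takes essentially the same approach as the paper: both reduce to the standard estimate that $S^{d-1}$ admits $\Theta_d(\rho^{1-d})$ unit vectors with pairwise angular separation $\rho$, here with $\rho = \Theta(\sqrt{h/r}) = \Theta(n^{-1/(d+1)})$, which gives $\Theta_d(n^{(d-1)/(d+1)})$ disjoint slices. The one small difference is how disjointness of the slices is certified. You compute the exact angular threshold by observing that $\sup_{x \in \overline{B_r}} \min(\langle x, u\rangle, \langle x, u'\rangle)$ is attained at the bisector point $x_\ast = r(u+u')/|u+u'|$ and equals $r\sqrt{(1+\langle u, u'\rangle)/2}$, then unwinding the inequality $\le r-h$. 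The paper instead notes that $S_v$ is contained in the open ball of radius $\sqrt{2rh}$ about its apex $v$ (since $|x-v|^2 = 2r^2 - 2\langle x,v\rangle < 2rh$ for $x\in S_v$), and simply spaces the apices so that these containing balls are disjoint. The containment argument is shorter and loses only a constant factor, while your bisector computation pins down the sharp threshold; both yield the same $\Omega_d(n^{(d-1)/(d+1)})$ bound, with your constant being slightly better.
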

	
	\begin{proof}
		Let $\delta > 0$ be a small number to be chosen later. The usual packing argument shows that there exist $\Omega_d(\delta^{1-d})$ points on the unit sphere in $\R^d$ with pairwise distance at least $\sqrt{2}\delta$. Indeed, we may pick the points greedily. At each step we place a small closed ball of radius $2\sqrt{2} \delta$ around every extant point. As long as the union of these balls does not cover the unit sphere, we may add another point. Since each small ball covers $O_d(\delta^{d-1})$ surface area, the greedy process yields at least $\Omega_d(\delta^{1-d})$ points. 
		
		Set $\delta\eqdef \sqrt{h/r} = n^{-\frac{1}{d+1}}$.	By applying homothety $v \mapsto rv$ to this $\sqrt{2}\delta$-separated subset of the unit sphere, we obtain a set of $\Omega_d(\delta^{1-d}) = \Omega_d \bigl( n^{\frac{d-1}{d+1}} \bigr)$ points on $\partial B_r$ with pairwise distance at least $\sqrt{2rh}$. Finally, \Cref{obs:slice_sandwich}\ref{subobs:ball} tells us that the ball slices with their apexes at these points are disjoint.
	\end{proof}
	
	We shall consider a random congruent copy of $\hat{S}$. Specifically,
	for a vector $z$ and a rotation $\rho$ we consider $z+\rho \hat{S}$;
	we sample $z$ uniformly from the open ball $B_{4r}$, and $\rho$ uniformly
	from the special orthogonal group $\SO(d)$ endowed with the usual Haar measure.
	
	\begin{lemma}\label{lem:measurebound}
		There exists $c' = c'(d, \alpha) > 0$ such that $\p \bigl((z+\rho \hat{S}) \cap P \neq \varnothing \bigr) \ge c'$.
	\end{lemma}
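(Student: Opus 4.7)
The plan is a second-moment (Paley--Zygmund) argument applied to $X := \abs{P \cap (z + \rho\hat{S})}$. By Cauchy--Schwarz, $\p(X \ge 1) \ge \E[X]^2/\E[X^2]$, so it suffices to show $\E[X] = \Theta_d(1)$ and $\E[X^2] = O_{d,\alpha}(1)$.

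For the first moment, the key observation is that the description of $\hat{S} = S_{re_1} - re_1$ yields the slab containment $\hat{S} \subset \{x \in \R^d : -h \le x_1 \le 0,\ \lVert x_\perp\rVert \le \sqrt{2rh}\}$, so $\hat{S}$ has diameter $O(\sqrt{rh}) \ll r$. Consequently $p - \rho\hat{S} \subset B_{3r}$ for every $p \in P$ and every $\rho \in \SO(d)$, and hence $\p(p \in z + \rho\hat{S}) = \vol(\hat{S})/\vol(B_{3r})$ independently of $p$ and $\rho$. An elementary volume computation yields $\vol(\hat{S}) = \Theta_d(r^{(d-1)/2} h^{(d+1)/2})$; substituting $r = \alpha n^{1/d}$ and $h = \alpha n^{(1-d)/(d(d+1))}$, the $n$-exponents cancel and $\E[X] = n\vol(\hat{S})/\vol(B_{3r}) = \Theta_d(1)$.

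The second moment is the crux. The off-diagonal part equals
\[
\sum_{p \ne q} \E_\rho\bigl[\vol\bigl(\hat{S} \cap (\hat{S} + \rho^{-1}(p-q))\bigr)\bigr]\big/\vol(B_{3r}).
\]
Since $\rho^{-1}(p-q)$ is uniform on the sphere of radius $\ell := \abs{p-q}$, and the slab containment forces the intersection to vanish unless $\abs{(\rho^{-1}(p-q))_1} \le h$, one obtains
\[
\E_\rho\bigl[\vol\bigl(\hat{S} \cap (\hat{S} + \rho^{-1}(p-q))\bigr)\bigr] \lesssim_d \min(1, h/\ell)\cdot\vol(\hat{S}).
\]
Combined with the packing bound $O_d(\ell^{d-1})$ on the density of $P$-points at distance $\ell$ from a given $p$ (from $1$-separation of $P$), the inner sum over $q$ is $\lesssim_d \int_1^{\sqrt{rh}} (h/\ell)\,\ell^{d-1}\,d\ell \lesssim_d h(rh)^{(d-1)/2} \asymp_d \vol(\hat{S})$. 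So the full off-diagonal is $\lesssim_d n\,\vol(\hat{S})^2/\vol(B_{3r}) = \vol(\hat{S})\cdot\E[X]$, and since $\vol(\hat{S}) \asymp_d \alpha^d$ this is $O_{d,\alpha}(1)$.

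The main obstacle is the off-diagonal second-moment bound: it requires a delicate interplay between the thin-pancake shape of $\hat{S}$ (whose $\SO(d)$-average produces the decaying $h/\ell$ factor that kills pairs at scale $\ell \gg h$) and the $O_d(\ell^{d-1})$ packing density of $P$. These balance exactly at the chosen $r, h$, yielding a bound independent of $n$. Paley--Zygmund then delivers $\p(X \ge 1) \ge c'(d,\alpha) > 0$.
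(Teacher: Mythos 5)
Your proposal is correct and follows essentially the same route as the paper: a Paley--Zygmund second-moment argument where the first moment is $\Theta_d(1)$ by a direct volume ratio, and the second-moment off-diagonal is controlled by the slab observation (probability $O_d(\min(1,h/\ell))$ that a random rotation of a vector of length $\ell$ lands within the $h$-slab) combined with the $1$-separation packing bound. The paper phrases the pair estimate as a conditional probability through the slab $\overline{S}$ rather than via the $\SO(d)$-average of $\vol(\hat{S}\cap(\hat{S}+\rho^{-1}(p-q)))$, but the estimates and the final cancellation at scale $\sqrt{rh}$ are the same.
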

	
	\begin{proof}[Proof of \Cref{thm:lowerbound} assuming \Cref{lem:measurebound}]
		
		Pick two points $z \in B_{4r}, \, z' \in B_{3r}$ and a rotation $\rho \in \SO(d)$ uniformly at random. Since $\rho S_v$ and $\rho S_{re_1}$ are identically distributed,
		\begin{equation} \label{eq:trans_rot_1}
			\p\bigl((z'+\rho S_v) \cap P \neq \varnothing\bigr) = \p\bigl((z'+\rho S_{re_1}) \cap P \neq \varnothing\bigr) = \p\bigl((z'+r\rho e_1+ \rho \hat{S}) \cap P \neq \varnothing\bigr). 
		\end{equation}
		The random vector  $z'+r\rho e_1$ is uniform on a translate of $B_{3r}$ by a vector of length $r$, and that is contained in $B_{4r}$; so, it follows that
		\begin{equation} \label{eq:trans_rot_2}
			\p\bigl((z'+r\rho e_1+ \rho \hat{S}) \cap P \neq \varnothing\bigr) =  \p\bigl((z+\rho \hat{S}) \cap P \neq \varnothing \mid z\in B_{3r}+r\rho e_1\bigr).
		\end{equation}
		Because $(z+\rho \hat{S})\cap P=\varnothing$ holds unless $z\in B_{2r}$ and $B_{2r}\subset B_{3r}+r\rho e_1$, we may estimate the above as
		\begin{equation} \label{eq:trans_rot_3}
			\p\bigl((z+\rho \hat{S}) \cap P \neq \varnothing \mid z\in B_{3r}+r\rho e_1\bigr)  \geq \p\bigl((z+\rho \hat{S}) \cap P \neq \varnothing\bigr) \ge c'.
		\end{equation}
		By combining \eqref{eq:trans_rot_1}, \eqref{eq:trans_rot_2}, and \eqref{eq:trans_rot_3}, we conclude that $\p\bigl((z'+\rho S_v) \cap P \neq \varnothing\bigr) \ge c'$.
		
		Suppose $S_{v_1}, \dotsc, S_{v_m}$ are disjoint ball slices in $B_r$, where $m = cn^{\frac{d-1}{d+1}}$ as in \Cref{prop:slice_packing}. Let $N$ be the number of sets $z'+\rho S_{v_i}$ that intersect $P$. Then, by the linearity of expectation, 
		\[
		\E(N) \ge m \cdot \p\bigl((z'+\rho S_v) \cap P \neq \varnothing\bigr) \ge  cc' \cdot n^{\frac{d-1}{d+1}}.
		\]
		It follows that there exists a pair $(z_0, \rho_0) \in B_{3r} \times \SO(d)$ such that at least $N' \eqdef cc' \cdot n^{\frac{d-1}{d+1}}$ sets among $Q_i \eqdef (z_0+\rho_0 S_{v_i}) \cap P \, (i = 1, \dotsc, m)$ are nonempty. We may assume without loss of generality that $Q_1, \dotsc, Q_{N'} \neq \varnothing$. Pick $q_i \in Q_i$ arbitrarily. Then $Q \eqdef \{q_1, \dotsc, q_{N'}\}$ is a convex independent subset of $P$. Indeed, the hyperplane $\{ x \in \R^d : \langle x-z_0, v_i \rangle = r(r-h) \}$ through the base of the translated ball slice $z_0+\rho_0 S_{v_i}$  separates the point $p_i$ from the rest of $Q$. The proof is done upon setting $\beta \eqdef  cc'$.
	\end{proof}
	
	The following fact from probability theory will be useful in our proof of \Cref{lem:measurebound}. 
	
	\begin{observation}[Folklore] \label{obs:prob_CS}
		If $X$ is a random variable taking nonnegative integer values, then
		\[
		\p(X > 0) \ge \frac{(\E X)^2}{\E(X^2)}.
		\]
	\end{observation}
	
	\begin{proof}
		For any positive integer $k$, write $p_k \eqdef \p(X = k)$. The Cauchy--Schwarz inequality implies that
		\[
		\p(X > 0) \cdot \E(X^2) = \biggl( \sum_{k=1}^{\infty} p_k \biggr) \cdot \biggl( \sum_{k=1}^{\infty} k^2p_k \biggr) \ge \biggl( \sum_{k=1}^{\infty} kp_k \biggr)^2 = (\E X)^2. \qedhere
		\]
	\end{proof}
	
	\begin{proof}[Proof of \Cref{lem:measurebound}]
		Denote by $\vol(\bullet)$ the standard $d$-dimensional Lebesgue measure. For instance, we have $\vol(B_r) = \Theta_d(r^d)$. Since $\hat{S}$ is sandwiched between a cone and a cylinder of the same height $h$ and the same base area $\Theta_d(s^{d-1})$ (see \Cref{obs:slice_sandwich}), it follows that $\vol(\hat{S}) = \Theta_d(hs^{d-1}) = \Theta_{d, \alpha}(1)$.
		
		For any point $x \in P$ and any fixed rotation $\rho_0 \in \SO(d)$, observe that
		\begin{equation} \label{eq:singletondup}
			\p(x \in z+\rho_0\hat{S}) = \p(z \in x-\rho_0\hat{S}) \overset{(*)}{=} \frac{\vol(x-\rho_0\hat{S})}{\vol(B_{4r})} = \frac{\vol(\hat{S})}{\vol(B_{4r})} = \frac{\Theta_{d, \alpha}(1)}{\Theta_d \bigl( (4r)^d \bigr)} = \Theta_{d, \alpha} \Bigl( \frac{1}{n} \Bigr),
		\end{equation}
		where we used that $x - \rho_0\hat{S} \subseteq B_{4r}$ at the step marked with $(*)$.
		
		Write the random variable $X \eqdef |P \cap (z+\rho\hat{S})|$ as $X = \sum\limits_{x \in P} \mathds{1}_{\{x \in z + \rho\hat{S}\}}$, where
		\[
		\mathds{1}_{\{x \in z + \rho\hat{S}\}} \eqdef \begin{cases}
			1 \qquad &\text{when $x \in z + \rho \hat{S}$}, \\
			0 \qquad &\text{when $x \notin z + \rho \hat{S}$}.
		\end{cases}
		\]
		From the linearity of expection it follows that
		\begin{align} \label{eq:secondmom_expansion}
			\E(X^2) &= \E\Biggl( \biggl(\sum_{x\in P} \mathds{1}_{\{x \in z + \rho\hat{S}\}} \biggl)^2 \Biggr) = \E \biggl( \sum_{x\in P} \sum_{y\in P} \mathds{1}_{\{x \in z + \rho\hat{S}\}} \cdot \mathds{1}_{\{y \in z + \rho\hat{S}\}} \biggr) \nonumber \\
			&= \sum_{x \in P} \sum_{y \in P} \E \bigl( \mathds{1}_{\{x \in z + \rho\hat{S}\}} \cdot \mathds{1}_{\{y \in z + \rho\hat{S}\}} \bigr) = \sum_{x \in P} \sum_{y \in P} \p(x, y \in z+\rho\hat{S}). 
		\end{align}
		Recall that $s = \bigl( 1 - o(1) \bigr) \cdot \sqrt{2rh}$ and $\hat{S} \subset B_{2s}$. So, $\p(x, y \in z + \rho\hat{S}) = 0$ if $|x - y| \ge 4s$, and hence
		\begin{equation} \label{eq:expansion_rearrange}
			\sum_{x \in P} \sum_{y \in P} \p(x, y \in z+\rho\hat{S}) = \sum_{x \in P} \Biggl( \p(x \in z+\rho\hat{S}) + \sum_{\substack{y \in P \setminus \{x\} \\ |x-y|<4s}} \p(x, y \in z+\rho\hat{S}) \Biggr). 
		\end{equation}
		By combining \eqref{eq:secondmom_expansion} and \eqref{eq:expansion_rearrange}, from \eqref{eq:singletondup} we deduce that
		\begin{equation} \label{eq:secondmom_doubleton}
			\E(X^2) = \Theta_{d, \alpha}(1) + \sum_{x \in P} \sum_{\substack{y \in P \setminus \{x\} \\ |x-y|<4s}} \p(x, y \in z+\rho\hat{S}). 
		\end{equation}
		
		\begin{claim} \label{claim}
			\hypertarget{claim}{Let} $x, y \in B_r$ be distinct with $\abs{x - y} < 4s$. We have
			\begin{enumerate}[label=(\roman*), ref=(\roman*)]
				\item \label{pair:dependence} $\p(x, y \in z + \rho \hat{S})$ depends only on $\abs{x-y}$, and
				\item \label{pair:estimate} if $\abs{x-y} \geq h$, then $\p(x, y \in z + \rho \hat{S}) = O_{d, \alpha} \bigl( \frac{h}{|x - y|n} \bigr)$. 
			\end{enumerate}
		\end{claim}
		
		\begin{proof}
			Suppose $x,y$ and $x',y'$ are two pairs of points in $B_r$ satisfying $\abs{x-y}=\abs{x'-y'}$. Define sets
			\begin{align*}
				\Omega &\eqdef \bigl\{ (z, \rho) \in \R^d \times \SO(d) : x, y \in z + \rho\hat{S} \bigr\}, \\
				\Omega' &\eqdef \bigl\{ (z, \rho) \in \R^d \times \SO(d) : x', y' \in z + \rho\hat{S} \bigr\}. 
			\end{align*}
			Note that, because $\hat{S}\subset B_r$, the sets $\Omega$ and $\Omega'$ are both contained in $B_{4r}\times \SO(d)$. Choose $\rho_0\in\SO(d)$ and $w\in \R^d$ so that $x'=w+\rho_0 x$ and $y'=w+\rho_0 y$. Observe that the map $(z,\rho)\mapsto (w+\rho_0z,\rho_0\rho)$ is an invertible measure-preserving map from $\Omega$ to $\Omega'$; phrased differently, $\Omega$ and $\Omega'$ are translates of one another in the semidirect product $\R^d\rtimes \SO(d)$. In particular, $\Omega$ and $\Omega'$ have the same measure, which in view of $\Omega,\Omega'\subset B_{4r}\times \SO(d)$ shows $\p(x, y \in z + \rho \hat{S})=\p(x', y' \in z + \rho \hat{S})$, i.e., part \ref{pair:dependence} holds.
			
			\smallskip
			
			In view of part \ref{pair:dependence}, the probability does not change if instead of viewing $y$ as fixed, we think of it as being chosen uniformly at random from the sphere of radius $r\eqdef \abs{x-y}$ around $x$. Consider the set $\overline{S} \eqdef \{p \in \R^d : -h \le \langle p, e_1 \rangle \le 0\}$ which is a slab of width $h$ containing $\hat{S}$. Then
			\begin{align*}
				\p(x, y \in z + \rho \hat{S}) &= \p(x \in z + \rho \hat{S}) \cdot \p(y \in z+\rho \hat{S} \mid x \in z + \rho \hat{S}) \\
				&\le \p(x \in z + \rho \hat{S}) \cdot \p(y \in z+\rho \overline{S} \mid x \in z + \rho \hat{S}) \\
				&= \Theta_{d, \alpha} \Bigl( \frac{1}{n} \Bigr) \cdot \p(y \in z+\rho \overline{S} \mid x \in z + \rho \hat{S}). 
			\end{align*}
			Notice that the point $y$ is uniform on the sphere of radius $r$ around~$x$, and the slab $z+\rho \overline{S}$ intersects the sphere in a fraction $\Theta(h/r)$ of its surface. Hence \ref{pair:estimate} follows. 
		\end{proof}
		
		Write $A \lesssim B$ in place of $A = O_{d, \alpha}(B)$ for brevity. Since points of $P$ are $1$-separated and $h \to 0$, the condition $|x-y| \geq h$ holds for all pairs of distinct $x, y \in P$. It then follows from \hyperlink{claim}{Claim} that
		\begin{align} \label{eq:doubleton_estimate}
			\sum_{x \in P} \sum_{\substack{y \in P \setminus \{x\} \\ |x-y|<4s}} \p(x, y \in z+\rho\hat{S}) &\lesssim \sum_{x \in P} \sum_{\substack{y \in P \setminus \{x\} \\ |x-y|<4s}} \frac{h}{|x-y|n} \nonumber \\
			&= \frac{h}{n} \cdot \sum_{x \in P} \sum_{\substack{y \in P \setminus \{x\} \\ |x-y|<4s}} \int_{|x-y|}^{+\infty} \frac{dt}{t^2} \nonumber \\
			&\overset{(\spadesuit)}{=} \frac{h}{n} \cdot \sum_{x \in P} \sum_{\substack{y \in P \setminus \{x\} \\ |x-y|<4s}} \int_0^{+\infty} \frac{\mathds{1}_{\{t > |x-y|\}} (t)}{t^2} \, dt \nonumber \\
			&= \frac{h}{n} \cdot \sum_{x \in P} \int_0^{+\infty} \frac{\sum\limits_{\substack{\scriptscriptstyle y \in P \setminus \{x\} \\ \scriptscriptstyle |x-y|<4s}} \mathds{1}_{\{t > |x-y|\}} (t)}{t^2} \, dt \nonumber \\
			&= \frac{h}{n} \cdot \sum_{x \in P} \int_0^{+\infty} \frac{\bigl|(x+B_{\min\{t,4s\}}) \cap (P \setminus \{x\})\bigr|}{t^2} \, dt \nonumber \\
			&\overset{(\clubsuit)}{\lesssim} \frac{h}{n} \cdot \sum_{x \in P} \int_0^{+\infty} \frac{(\min\{t, 4s\})^d}{t^2} \, dt \nonumber \\
			&= h \cdot \biggl( \int_0^{4s} t^{d-2} \, dt + (4s)^{d} \int_{4s}^{+\infty} \frac{dt}{t^2} \biggr) \nonumber \\
			&\lesssim hs^{d-1} \lesssim 1. 
		\end{align}
		Here the steps marked with $(\spadesuit)$ and $(\clubsuit)$ may require some further explanations: 
		\begin{itemize}
			\item At $(\spadesuit)$, we denoted by $\mathds{1}_{\{t > |x-y|\}}$ the indicator function on $t$ of the event $t > |x-y|$. That is, 
			\[
			\mathds{1}_{\{t > |x-y|\}} (t) \eqdef \begin{cases}
				1 \qquad &\text{if $t > |x-y|$}, \\
				0 \qquad &\text{if $t \le |x-y|$}. 
			\end{cases}
			\]
			\item At $(\clubsuit)$, we use the fact that any ball centered at some $x \in P$ of radius $R$ intersects $P \setminus \{x\}$ in at most $O_d(R^d)$ points, which follows from the assumption $\min\{|a-b| : a, b \in P, \, a \neq b\} = 1$. Similar to the deduction of $\diam(P) = \Omega(\sqrt[d]{n})$ in \Cref{sec:intro}, this is seen by placing an open ball of radius $\frac{1}{2}$ centered at each point $p \in P \setminus \{x\}$ and noticing that the balls are pairwise disjoint. 
		\end{itemize}
		By combining \eqref{eq:secondmom_doubleton} and \eqref{eq:doubleton_estimate} we obtain $\E(X^2) = O_{d, \alpha}(1)$. Thus, from \Cref{obs:prob_CS} we deduce that
		\[
		\p\bigl((z+\rho \hat{S}) \cap P \neq \varnothing\bigr) = \p(X > 0) \ge \frac{(\E X)^2}{\E(X^2)} = \frac{\Theta_{d, \alpha}(1)}{O_{d, \alpha}(1)} = \Omega_{d, \alpha}(1). \qedhere
		\]
	\end{proof}
	
	\section{Convex cups and caps in higher dimensions} \label{sec:cupcap}
	In this section we describe the basic properties of convex cups and caps in $\R^d$, for general $d$.
	
	\paragraph{Definitions and notations.} For any $x = (x_1, \dotsc, x_{d-1}, x_d) \in \R^d$, define the height $h(x) \eqdef x_d$ and the projection $\pi(x) \eqdef (x_1, \dotsc, x_{d-1})$. For each $x^* \in \R^{d-1}$, the fiber $\ipi(x^*) \eqdef \{x \in \R^d : \pi(x) = x^*\}$ is the line in $\R^d$ through $x^*$ that is parallel to the $d$-th coordinate axis. If $P \subset \R^d$ is a point set and $f \colon X \to Y$ is a function with $P \subseteq X \subseteq \R^d$, then we denote $f(P) \eqdef \{f(p) : p \in P\}$.
	
	For any finite point set $P = \{p_1, \dotsc, p_m\} \subset \R^d$, we define its \emph{affine span} as 
	\[
	\spa(P) \eqdef \Biggl\{ \sum_{i=1}^m c_i p_i : \sum_{i=1}^m c_i = 1, \, c_1, \dotsc, c_m \in \R \Biggr\}. 
	\]
	Denote by $\conv(P)$ its convex hull, and by $\conv_0(P)$ the interior of $\conv(P)$. Write
	\[
	\partial_{\pi} P \eqdef \bigl\{p \in P : \pi(p) \notin \conv_0 \bigl(\pi(P)\bigr)\bigr\}. 
	\]

	For a point set $P \subset \R^d$, recall that $P$ is \emph{in general position} if no $d+1$ points of $P$ are coplanar (i.e., no $(d+1)$-subset of $P$ is contained in a $(d-1)$-hyperplane). A pair $(S, T)$ of disjoint nonempty subsets of $P$ is a \emph{generic pair} if $|S|+|T| = d+2$. Call $P$ \emph{generic} if $\lvert \spa(S) \cap \spa(T) \rvert = 1$ for every generic pair $(S, T)$ of $P$. As an illustration, suppose $P \subset \R^2$ is a planar point set. Then
	\vspace{-0.5em}
	\begin{itemize}
		\item $P$ is in general position if no three points of $P$ are coplanar, and 
		\vspace{-0.5em}
		\item $P$ is generic if no two lines spanned by points in $P$ are parallel. 
	\end{itemize}
	\vspace{-0.5em}
	We refer to a finite set $P \subset \R^d$ as \emph{regular} if
	\vspace{-0.5em}
	\begin{enumerate}[label=(R\arabic*), ref=(R\arabic*)]
		\item \label{regular:axis_inj_dup} the projection of $P$ to any coordinate axis is injective, and
		\vspace{-0.5em}
		\item \label{regular:P_gen_dup} the original set $P \subset \R^d$ is in general position and generic, and
		\vspace{-0.5em}
		\item \label{regular:pi_gen_dup} the projection set $\pi(P) \subset \R^{d-1}$ is in general position and generic. 
	\end{enumerate}
	\vspace{-0.5em}
	
	Let $P \subset \R^d$ be a regular point set and suppose $(S, T)$ is a pair of disjoint nonempty subsets of $P$ with $|S| + |T| = d+1$. Then \ref{regular:axis_inj_dup} implies that $\bigl( \pi(S), \pi(T) \bigr)$ is a generic pair of $\pi(P)$, and so from \ref{regular:pi_gen_dup} we deduce that $\pi(S)$ intersects $\pi(T)$ at a single point, say $q$. Write
	\[
	\{q_S\} \eqdef \spa(S) \cap \pi^{-1}(q), \qquad \{q_T\} \eqdef \spa(T) \cap \pi^{-1}(q). 
	\]
	Here $q_S, q_T$ are unique because \ref{regular:axis_inj_dup} implies that neither of $\spa(S), \, \spa(T)$ is parallel to the $d$-th coordinate axis. Moreover, the following algebraic property will be useful: 
	
	\begin{proposition} \label{prop:regular_alg}
		If $S = \{u_1, \dots, u_s\}$ and $T = \{v_1, \dots, v_t\}$ (hence $s+t = d+1$), then there exists a unique tuple of nonzero coefficients $(\alpha_1, \dots, \alpha_s, \beta_1, \dots, \beta_t)$ with $\sum_{i=1}^s \alpha_i = 1, \, \sum_{i=1}^t \beta_i = 1$ such that
		\[
		q_S = \sum_{i=1}^s \alpha_i u_i, \qquad q_T = \sum_{i=1}^t \beta_i v_i.  
		\]
	\end{proposition}
	
	\begin{proof}
		By symmetry, it suffices to prove the statements concerning the $\alpha$'s.
		The existence of the $\alpha$'s follows from $q_S\in \spa(S)$.
		Had one of the $\alpha$'s vanished, say $\alpha_1=0$, it would have meant that
		$\pi(S\setminus \{u_1\})\cup \pi(T)$ is not in general position in $\R^{d-1}$, contradicting \ref{regular:pi_gen_dup}. Finally, the uniqueness of $\alpha$'s follows from the set $S$ being in general position.
	\end{proof}
	
	With $S$ and $T$ being defined as above, we say that $S$ \emph{lies above} $T$ if $h(q_S) > h(q_T)$. For disjoint finite point sets $A, B \subset \R^d$, we say that $A$ is \emph{high above} $B$ if $A \cup B$ is regular and $S$ lies above $T$ for every pair $(S, T)$ of $A \cup B$ with $S \subseteq A, \, T \subseteq B$ such that $\bigl(\pi(S), \pi(T)\bigr)$ is a generic pair of $\pi(A \cup B)$ (i.e., $|S| + |T| = d+1$). The letters $A$ and $B$ are chosen as abbreviations for ``above'' and ``below''. 
	
	\smallskip
	
	We are ready to generalize the definitions of convex cups and caps (\cite{valtr92}) to higher dimensions. Let $P \subset \R^d$ be a regular point set. Call $P$ a \emph{convex cup} if for any $p, p_1, \dotsc, p_d \in P$, 
	\[
	\pi(p) = \sum_{i=1}^d c_i \pi(p_i) \qquad \text{implies} \qquad h(p) < \sum_{i=1}^d c_i h(p_i), 
	\]
	where $c_1, \dotsc, c_d \in (0, 1)$ and $c_1 + \dots + c_d = 1$. Call $P$ a \emph{convex cap} if for any $p, p_1, \dotsc, p_d \in P$,  
	\[
	\pi(p) = \sum_{i=1}^d c_i \pi(p_i) \qquad \text{implies} \qquad h(p) > \sum_{i=1}^d c_i h(p_i), 
	\]
	where $c_1, \dotsc, c_d \in (0, 1)$ and $c_1 + \dots + c_d = 1$. These definitions generalize those of planar convex cups and caps in \cite{valtr92}. Moreover, such cups and caps are crucial to our proof of \Cref{thm:upperbound}. 
	
	\paragraph{Properties.} We record a decomposition result which is crucial to the estimates in \Cref{sec:osci}. 
	
	\begin{proposition} \label{prop:cupcap_decomp}
		For every regular convex independent set $C \subset \R^d$, there exist a convex cap $C_A\subseteq C$ and a convex cup $C_B\subseteq C$ such that $C_A \cup C_B = C$ and $C_A \cap C_B = \partial_{\pi} C$.
	\end{proposition}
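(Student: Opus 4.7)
The plan is to take $C_A$ and $C_B$ to be, respectively, the vertex sets of the upper and lower envelopes of the $d$-polytope $K \eqdef \conv(C)$ over its shadow $K^* \eqdef \pi(K) = \conv(\pi(C))$. For $x^* \in K^*$, define $f_A(x^*) \eqdef \max\{h(x) : x \in K \cap \ipi(x^*)\}$ and $f_B(x^*) \eqdef \min\{h(x) : x \in K \cap \ipi(x^*)\}$, which are concave and convex respectively. Set $C_A \eqdef \{p \in C : h(p) = f_A(\pi(p))\}$ and $C_B \eqdef \{p \in C : h(p) = f_B(\pi(p))\}$. The three things to check are then $C_A \cup C_B = C$, $C_A \cap C_B = \partial_\pi C$, and that $C_A$, $C_B$ are a convex cap and a convex cup, respectively.

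The equality $C_A \cup C_B = C$ is an extremality observation: every $p \in C$ is a vertex of $K$, and if $f_B(\pi(p)) < h(p) < f_A(\pi(p))$ held, then $p$ would be a proper convex combination of the two endpoints of the vertical segment $K \cap \ipi(\pi(p))$, contradicting extremality. To prove $C_A \cap C_B = \partial_\pi C$, first note that $p \in C_A \cap C_B$ makes the fiber above $\pi(p)$ equal to $\{p\}$; since any interior point of $K^*$ lifts to $\intr K$, interior fibers have positive length, and so $\pi(p) \in \partial K^*$, i.e.\ $p \in \partial_\pi C$. For the converse, suppose $\pi(p) \in \partial K^*$ but some $q \neq p$ lies in the fiber; writing $q$ as a positive convex combination of distinct points of $C$ and, if $p$ appears in the support with weight $\mu_0 < 1$, replacing $q$ by the renormalised contribution from $C \setminus \{p\}$ (which cannot equal $p$ by convex independence), we may assume $\pi(p) = \sum_{i=1}^k \lambda_i \pi(p_i)$ with $p_i \in C \setminus \{p\}$ distinct, $\lambda_i > 0$. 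By \ref{regular:pi_general}, affine dependences among at most $d$ points of $\pi(C)$ are impossible, so $k \ge d$, while Carath\'eodory gives $k \le d$. Hence $k = d$ and $\pi(p)$ lies in the relative interior of the $(d-1)$-simplex $\conv(\pi(p_1), \dotsc, \pi(p_d)) \subseteq K^*$, so $\pi(p) \in \intr K^*$, a contradiction.

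Finally, to see that $C_A$ is a convex cap (the argument for $C_B$ being symmetric), note that regularity of $C_A$ is inherited from $C$. Given $p, p_1, \dotsc, p_d \in C_A$ with $\pi(p) = \sum c_i \pi(p_i)$ and $c_i \in (0,1)$, the same \ref{regular:pi_general} argument forces these $d+1$ points to be distinct; concavity of $f_A$ yields $h(p) \ge \sum c_i h(p_i)$, and equality would produce $p = \sum c_i p_i \in \conv(C \setminus \{p\})$, contradicting convex independence. Hence $h(p) > \sum c_i h(p_i)$. The main obstacle is the reverse inclusion $\partial_\pi C \subseteq C_A \cap C_B$ above, where \ref{regular:axis_inj} (to make $\pi|_C$ injective) and \ref{regular:pi_general} (to block the offending affine dependence) both play essential roles; the remaining arguments use only concavity and extremality, and \ref{regular:genericpair} is not needed for this proposition.
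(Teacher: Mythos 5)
Your proof is correct, and the decomposition it produces is literally the same pair $(C_A, C_B)$ the paper constructs, so the approach is essentially identical; the presentation differs in a minor but noteworthy way. The paper decides the fate of each $p \in C \setminus \partial_\pi C$ by comparing it against the segment $\ell_p \cap \conv(\partial_\pi C)$ and adds $\partial_\pi C$ to both halves by fiat, then verifies the cap property by sandwiching a hypothetical violator $p'$ between the point $\sum \alpha_i p_i$ above and a point of $\conv(\partial_\pi C)$ below, forcing $p' \in \conv_0(C)$. You instead define $C_A, C_B$ as the vertices of $\conv(C)$ lying on its upper and lower envelopes, so that the cap/cup property drops straight out of concavity/convexity of $f_A, f_B$ plus convex independence for strictness --- arguably the more transparent route. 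The price is that $\partial_\pi C \subseteq C_A \cap C_B$ is no longer built in and you must prove the fiber over any $\pi(p)$ with $p \in \partial_\pi C$ is a singleton, which you do correctly via Carath\'eodory together with \ref{regular:pi_general}; this also handles the degenerate case $|C| \le d$ uniformly, which the paper treats separately. One cosmetic remark: calling $K$ a ``$d$-polytope'' is inaccurate when $|C| \le d$, though nothing downstream relies on full-dimensionality.
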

	
	\begin{proof}
		If $|C| \le d$, then $\pi(C) \subset \R^{d-1}$ is convex independent by \ref{regular:pi_gen_dup}, since every $d$ points in $\R^{d-1}$ that are in general position serve as the vertices of a convex polytope. It then follows that $C = \partial_{\pi} C$ is both a convex cup and a convex cap, and hence $C_A = C_B = \partial_{\pi} C$ gives the required decomposition. 
		
		Suppose $|C| \ge d+1$ then, which implies $|\partial_{\pi} C| \ge d$. For any point $p \in C \setminus \partial_{\pi} C$, the fiber 
		\[
		\ell_p \eqdef \ipi\bigl( \pi(p) \bigr) = \{x \in \R^d : \pi(x) = \pi(p)\} 
		\]
		intersects $\conv(\partial_{\pi} C)$ in a line segment $s_p$. If $|\partial_{\pi} C| = d$, then \ref{regular:pi_gen_dup} implies that $\conv(\partial_{\pi} C)$ spans a $(d-1)$-dimensional hyperplane in $\R^d$, and hence the segment $s_p$ degenerates to a single point (but we still call it a segment). Since $p \notin \partial_{\pi} C$ and $C$ is convex, $p$ is either above $s_p$ or below $s_p$. Put $p$ into $C_A$ if $p$ is above $s_p$, and into $C_B$ if $p$ is below $s_p$. Add all points of $\partial_{\pi}C$ into both $C_A$ and $C_B$. 
		
		We claim that the pair $(C_A, C_B)$ fulfills the decomposition conditions. By symmetry, it suffices to show that $C_A$ is a convex cap. Assume to the contrary that there are points $p', p_1, \dotsc, p_d \in C_A$ and coefficients $\alpha_1, \dotsc, \alpha_d \in (0, 1)$ with $\alpha_1 + \dots + \alpha_d = 1$ such that 
		\[
		\pi(p') = \sum_{i=1}^d \alpha_i \pi(p_i) \qquad \text{and} \qquad h(p') < \sum_{i=1}^d \alpha_i h(p_i). 
		\]
		The inequality is strict because $C$ is in general position. The first condition implies that $p' \in C \setminus \partial_{\pi} C$. Denote by $p_A \eqdef \sum_{i=1}^d \alpha_i p_i$ the intersection point of the line $\ell_{p'}$ with $\spa(\{p_1, \dotsc, p_d\})$. The second condition implies that $p_A$ is above $p'$. Denote by $p_B$ the upper endpoint of the segment $s_{p'}$ which is below the point $p'$. Then it follows from $p_B \in \conv(\partial_{\pi} C)$ that
		\[
		p' \in \conv_0(\{p_A, p_B\}) \subseteq \conv_0(\{p_1, \dotsc, p_d\} \cup \partial_{\pi} C) \subseteq \conv_0(C), 
		\]
		which contradicts the assumption that $C$ is convex independent. The proof is complete. 
	\end{proof}
	
	\begin{proposition} \label{thm:cupcap_intpoint}
		Suppose $A, B \subset \R^d$ are point sets such that $A$ is high above $B$. If $P \subseteq A \cup B$ is a convex cup or cap, then either $P \setminus \partial_{\pi} P \subseteq A$ or $P \setminus \partial_{\pi} P \subseteq B$. 
	\end{proposition}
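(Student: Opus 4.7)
The plan is to argue by contradiction. I would assume WLOG that $P$ is a convex cup (the cap case reduces to this via the involution $x \mapsto -x$, which swaps cup with cap and the roles of $A$ and $B$) and that there exist $p \in A \cap (P \setminus \partial_\pi P)$ and $q \in B \cap (P \setminus \partial_\pi P)$. My goal is to locate a $(d+1)$-point configuration $\{p, q, r_1, \ldots, r_{d-1}\} \subseteq P$ at which the cup condition and the high-above property pin $h(p)$ on opposite sides of the same affine value.

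To produce the auxiliary points $r_j$, I would shoot the ray in $\R^{d-1}$ starting at $\pi(q)$ and passing through $\pi(p)$; since $\pi(p) \in \conv_0(\pi(P))$, this ray exits $\conv(\pi(P))$ past $\pi(p)$ at some boundary point $\xi$, so that $\pi(p) = \alpha \pi(q) + (1-\alpha)\xi$ for some $\alpha \in (0,1)$. Condition \ref{regular:pi_general} forces every facet of $\conv(\pi(P))$ to be a $(d-2)$-simplex on $d-1$ vertices from $\pi(\partial_\pi P)$, and the same condition pins $\xi$ to the relative interior of such a facet: otherwise $\pi(p)$ would be a strictly positive convex combination of fewer than $d$ points of $\pi(P)$, which would render at most $d$ points of $\pi(P)$ affinely dependent, in violation of \ref{regular:pi_general}. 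Writing $\xi = \sum_{j=1}^{d-1} \mu_j \pi(r_j)$ with each $\mu_j > 0$ and $r_j \in \partial_\pi P$ then yields the strictly positive convex representation
\[
\pi(p) = \alpha \pi(q) + \sum_{j=1}^{d-1}(1-\alpha)\mu_j \pi(r_j).
\]

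With this representation in hand, the cup condition applied with $p$ as cup center and $q, r_1, \ldots, r_{d-1}$ as endpoints gives $h(p) < \alpha h(q) + \sum_j (1-\alpha)\mu_j h(r_j)$. For the opposing inequality, I would split the endpoints by $A/B$-membership and form $S \eqdef \{p\} \cup \{r_j : r_j \in A\} \subseteq A$ and $T \eqdef \{q\} \cup \{r_j : r_j \in B\} \subseteq B$; this is a generic pair of $A \cup B$ of total size $d+1$, so by \ref{regular:genericpair} the $\pi$-affine spans of $S$ and $T$ meet in a single point. Regrouping the convex representation above into an affine expression on the $S$-side and a convex expression on the $T$-side identifies this common $\pi$-intersection, and a short calculation of the heights of $\spa(S)$ and $\spa(T)$ there reduces the high-above statement ``$\spa(S)$ above $\spa(T)$'' to $h(p) > \alpha h(q) + \sum_j (1-\alpha)\mu_j h(r_j)$, directly contradicting the cup inequality. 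The main obstacle is the ray/facet step in the second paragraph, where \ref{regular:pi_general} is essential for ruling out degenerate exit points of the ray; the remaining height arithmetic is routine bookkeeping.
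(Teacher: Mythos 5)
Your argument is correct and takes essentially the same route as the paper: shoot a ray in the $\pi$-projection from an interior point of one colour through an interior point of the other, use \ref{regular:pi_general} to guarantee the exit point lands in the relative interior of a $(d-2)$-simplex facet of $\conv(\pi(\partial_\pi P))$, obtain a strictly positive convex representation of $\pi(p)$ by $d$ points of $\pi(P)$, and then play the high-above inequality for the induced generic $A/B$-split of those $d+1$ points against the cup inequality. The only structural difference is that the paper factors the height bookkeeping into a standalone statement (\Cref{lem:cupcap_intpoint}, valid for any $K$ with $\pi(p)\in\conv_0(\pi(K))$) and proves the proposition by applying it to each point, whereas you roll the lemma into a single direct contradiction; a small bonus of your version is that starting from the explicit positive convex combination makes the normalizing coefficient (the paper's $\beta$) positive by construction, so the paper's brief sign case analysis disappears.
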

	
	This result, which will be crucial in the analysis of our construction, relies on the following lemma.
	
	\begin{lemma} \label{lem:cupcap_intpoint}
		Suppose $A, B \subset \R^d$ are two point sets such that $A$ is high above $B$. Let $\{p\} \cup K$ be a $(d+1)$-element subset of $A \cup B$ with $p \notin K$ and $\pi(p) \in \conv_0 \bigl(\pi(K)\bigr)$. 
		\begin{enumerate}[label=(\roman*), ref=(\roman*)]
			\item If $\{p\} \cup K$ is a convex cup with $p \in A$, then $K \subseteq A$. 
			\item If $\{p\} \cup K$ is a convex cap with $p \in B$, then $K \subseteq B$. 
		\end{enumerate}
	\end{lemma}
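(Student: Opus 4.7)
The plan is to argue both parts by contradiction, in each case applying the ``high above'' hypothesis to a well-chosen generic pair to extract a height inequality that directly conflicts with the \cuap{} definition. I will present the argument for part~(i) in detail; part~(ii) is identical up to swapping the roles of $A$ and $B$.

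Write $K = \{p_1, \dotsc, p_d\}$ and $\pi(p) = \sum_{i=1}^{d} c_i \pi(p_i)$ with $c_i \in (0,1)$, $\sum c_i = 1$, so that the convex cup condition reads $h(p) < \sum c_i h(p_i)$. Suppose toward contradiction that $K_B \eqdef K \cap B$ is nonempty, and set $K_A \eqdef K \cap A$, $\alpha \eqdef \sum_{p_i \in A} c_i$, $\beta \eqdef \sum_{p_i \in B} c_i$. Form the convex combinations $q_A \in \spa(K_A)$ and $q_B \in \spa(K_B)$ with weights $c_i/\alpha$ and $c_i/\beta$ respectively (the degenerate case $K_A = \varnothing$ is handled by the same argument with the obvious simplification), so that by construction $\pi(p) = \alpha\,\pi(q_A) + \beta\,\pi(q_B)$.

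Now apply the ``high above'' hypothesis to the generic pair $(S, T) \eqdef (K_A \cup \{p\},\, K_B)$, which satisfies $S \subseteq A$, $T \subseteq B$, and $|S|+|T| = d+1$. Condition~\ref{regular:genericpair} forces $\spa(\pi(S)) \cap \spa(\pi(T))$ to be a single point, and rewriting the identity above as $\pi(q_B) = \beta^{-1}\pi(p) - \alpha\beta^{-1}\pi(q_A)$ identifies this point as $\pi(q_B)$. Because $\pi(P)$ is in affinely general position (condition~\ref{regular:pi_general}), the restriction of $\pi$ to each of $\spa(S)$ and $\spa(T)$ is a bijection onto its image, so the unique lifts with $\pi(s)=\pi(t)=\pi(q_B)$ are $s = \beta^{-1}p - \alpha\beta^{-1}q_A \in \spa(S)$ and $t = q_B \in \spa(T)$. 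The ``high above'' inequality $h(s) > h(t)$, after clearing denominators, rearranges to $h(p) > \alpha h(q_A) + \beta h(q_B) = \sum c_i h(p_i)$, contradicting the cup condition on $\{p\}\cup K$. For part~(ii), the symmetric choice $(S,T) \eqdef (K_A,\, K_B \cup \{p\})$ produces the reverse inequality $h(p) < \sum c_i h(p_i)$, contradicting the cap condition.

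The only delicate step is the identification of the lifts $s$ and $t$: one must check that the affine combinations I wrote down genuinely land in $\spa(S)$ and $\spa(T)$ (a quick verification that the coefficients sum to $1$), and that $\pi$ is injective on each of these affine spans. Both facts follow from regularity, since $\pi(S)$ and $\pi(T)$ are in affinely general position and their affine hulls meet transversely in $\R^{d-1}$. The remainder of the argument is routine bookkeeping with the coefficients $c_i$.
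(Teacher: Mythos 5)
Your proof is correct and takes essentially the same approach as the paper's: apply the ``high above'' hypothesis to the generic pair $(K_A\cup\{p\},\,K_B)$ for part~(i) (resp.\ $(K_A,\,K_B\cup\{p\})$ for part~(ii)), extract a height inequality, and contradict the \cuap{} condition. The one genuine streamlining is in how the coefficients are handled. The paper writes the two lifts of the intersection point as affine combinations with a priori unknown coefficients, derives from $\pi(p)\in\conv_0(\pi(K))$ that the induced coefficients $c_i$ are positive, and then still needs a short case split on the sign of the leading coefficient $\beta$ of $p$ (the $\beta<0$ branch is dispatched by contradicting $\sum\alpha_i=1$). You instead start from the known convex coefficients $c_i>0$, build $q_A$, $q_B$ as genuine convex combinations so that $\alpha=\sum_{p_i\in A}c_i$ and $\beta=\sum_{p_i\in B}c_i$ are manifestly positive, and then \emph{verify} that $\beta^{-1}p-\alpha\beta^{-1}q_A$ and $q_B$ are the two lifts; this makes the sign question moot. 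Your noted ``delicate step'' (injectivity of $\pi$ on $\spa(S)$ and $\spa(T)$) is indeed what condition~\ref{regular:pi_general} supplies, since $|S|,|T|\le d$ forces $\pi(S)$ and $\pi(T)$ to be affinely independent; and the degenerate case $K_A=\varnothing$ simplifies exactly as you say, with $s=p$, $t=\sum c_ip_i$. So this is a correct proof, essentially the paper's argument run in the opposite direction, trading the paper's sign case analysis for an explicit identification of the intersection point.
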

	
	\begin{proof}
		Let $K \eqdef \{p_1, \dotsc, p_d\}$. Suppose $\{p\} \cup K$ is a convex cup with $p \in A$. The other case is similar. 
		
		Suppose to the contrary that $K \cap B \neq \varnothing$. Assume $p_1, \dotsc, p_k \in B$ and $p_{k+1}, \dotsc, p_d \in A$, where $1 \le k \le d$. Consider the pair $(S, T) \eqdef \bigl(\{p_{k+1}, \dotsc, p_d, p\}, \{p_1, \dotsc, p_k\}\bigr)$ of $A \cup B$. By \ref{regular:pi_gen_dup} we obtain that $\pi \bigl(\spa(S)\bigr)$ and $\pi \bigl(\spa(T)\bigr)$ intersect at a unique point $q^* \in \R^{d-1}$. Set
		\[
		\{q_T\} \eqdef \spa(T) \cap \ipi(q^*), \qquad
		\{q_S\} \eqdef \spa(S) \cap \ipi(q^*). 
		\]
		It follows from \Cref{prop:regular_alg} that there are unique nonzero coefficients $\alpha_1, \dotsc, \alpha_k$ with $\sum_{i=1}^k \alpha_i = 1$ and $\beta, \beta_{k+1}, \dotsc, \beta_d$ with $\beta + \sum_{i=k+1}^d \beta_i = 1$ such that 
		\begin{equation} \label{eq:combin_aff_dup}
			q_T = \sum_{i=1}^k \alpha_i p_i, \qquad q_S = \beta p + \sum_{i=k+1}^d \beta_i p_i. 
		\end{equation}
		
		Since $A$ is high above $B$, it follows that $h(q_S) > h(q_T)$, and hence $q_S = q_T + \lambda e_d$ for some $\lambda > 0$, where $e_d \eqdef (0, \dotsc, 0, 1)$. This together with \eqref{eq:combin_aff_dup} show that 
		\begin{equation} \label{eq:combin_p_dup}
			\beta p = \sum_{i=1}^k \alpha_i p_i - \sum_{i=k+1}^d \beta_i p_i + \lambda e_d. 
		\end{equation}
		
		By applying $\pi$ to the both sides of~\eqref{eq:combin_p_dup}, we deduce that 
		\begin{equation} \label{eq:combin_proj_dup}
			\pi(p) = \sum_{i=1}^k (\beta^{-1}\alpha_i) \pi(p_i) + \sum_{i=k+1}^d (-\beta^{-1}\beta_i) \pi(p_i) = \sum_{i=1}^d c_i \pi(p_i), 
		\end{equation}
		where $c_i \eqdef \beta^{-1} \alpha_i$ for $i = 1, \dotsc, k$ and $c_i \eqdef -\beta^{-1} \beta_i$ for $i = k+1, \dotsc, d$. Observe that
		\[
		\sum_{i=1}^n c_i = \sum_{i=1}^k \beta^{-1} \alpha_i - \sum_{i=k+1}^d \beta^{-1} \beta_i = \beta^{-1} - \beta^{-1} (1 - \beta) = 1. 
		\]
		Since $\pi(p) \in \conv_0\bigl(\pi(K)\bigr)$ and the expression of $\pi(p)$ as a convex combination of $\pi(K)$ is unique, it follows from \eqref{eq:combin_proj_dup} that $c_i\in (0,1)$ for all~$i$. If $\beta < 0$, then $\alpha_i < 0$ for $i = 1, \dotsc, k$, which contradicts $\sum_{i=1}^k \alpha_i = 1$. So, $\beta > 0$. By applying $h$ to the both sides of \eqref{eq:combin_p_dup} we obtain
		\[
		h(p) = \sum_{i=1}^k (\beta^{-1}\alpha_i) h(p_i) + \sum_{i=k+1}^d (-\beta^{-1}\beta_i) h(p_i) + \beta^{-1} \lambda = \sum_{i=1}^d c_i h(p_i) + \beta^{-1} \lambda > \sum_{i=1}^d c_i h(p_i), 
		\]
		which contradicts $\{p\} \cup K$ being a convex cup. We conclude that $K \subseteq A$. 
	\end{proof}
	
	\begin{proof}[Proof of \Cref{thm:cupcap_intpoint}] 
		Suppose $P$ is a convex cup. The other case is similar. If $(P \setminus \partial_{\pi} P) \cap A = \varnothing$, then $P \setminus \partial_{\pi} P \subseteq B$, and the proof is done. We may assume that $p \in (P \setminus \partial_{\pi} P) \cap A$ then. 
		
		From $P \setminus \partial_{\pi} P \neq \varnothing$ we see that $|P| \ge d+1$, and so $\conv\bigl(\pi(\partial_{\pi} P)\bigr)$ is a $(d-1)$-dimensional convex polytope. Fix any $p' \in P \setminus (\partial_{\pi} P \cup \{p\})$. The ray $\overrightarrow{\pi(p')\pi(p)}$ intersects a unique facet of the polytope $\conv\bigl(\pi(\partial_{\pi} P)\bigr)$, say $F$. Since \ref{regular:pi_gen_dup} implies that every facet of $\conv\bigl(\pi(\partial_{\pi} P)\bigr)$ consists of exactly $d-1$ points, there exist distinct $p_1, \dotsc, p_{d-1} \in \partial_{\pi} P$ such that $F = \conv\bigl(\pi(\{p_1, \dotsc, p_{d-1}\})\bigr)$. We claim that
		\[
		\pi(p) \in \conv_0 \bigl(\pi(\{p', p_1, \dotsc, p_{d-1}\})\bigr). 
		\]
		Indeed, this follows from \Cref{prop:regular_alg} applied to the pair $(S, T) \eqdef \bigl( \{p, p'\}, \{p_1, \dots, p_{d-1}\} \bigr)$. Finally, since $p \in A$, \Cref{lem:cupcap_intpoint} with $\{p', p_1, \dots, p_{d-1}\}$ in the place of $K$ implies that $\{p', p_1, \dots, p_{d-1}\} \subseteq A$. In particular, $p' \in A$. Since $p' \in (\partial_{\pi} P \cup \{p\})$ is arbitrary, we conclude that $P \setminus \partial_{\pi} P \subseteq A$. 
	\end{proof}
	
	\section{Oscillators} \label{sec:osci}
	
	This section is devoted to the definition and properties of \emph{oscillators}. 
	
	\paragraph{Definitions and basic properties.} For any $x = (x_1, \dotsc, x_d) \in \R^d$, we write $\pi_1(x) \eqdef x_1$. 
	
	A sequence of points $(p_1, p_2, \dotsc)$ in $\R^d$ is called \emph{ordered} if $\pi_1(p_1) < \pi_1(p_2) < \dotsb$. By \ref{regular:axis_inj_dup}, any regular point set $P \subset \R^d$ can be sorted to make an ordered sequence. We shall abuse the notations by writing $P = \{p_1, \dotsc, p_m\}$ for the sorted ordered sequence formed by some regular point set $P$. That is, $\pi_1(p_1) < \dotsb < \pi_1(p_m)$. We also write $P_{a, b} \eqdef \{p_t \in P : t \equiv a \pmod b\}$ for $b \ge 2$ and $a \in [b]$. For instance, $P_{1, 2} = \{p_1, p_3, \dotsc\}$ and $P_{2, 2} = \{p_2, p_4, \dotsc\}$. 
	
	We define \emph{$d$-oscillators} recursively. Let $P = \{p_1, \dotsc, p_m\} \subset \R^d$ be a finite point set. 
	\vspace{-0.5em}
	\begin{enumerate}[label=(O\arabic*), ref=(O\arabic*)]
		\item \label{osci:dim1} If $m = 1$ or $d = 1$, then $P$ is a $d$-oscillator. 
		\vspace{-0.5em}
		\item \label{osci:dim2} Suppose $m \ge 2$ and $d \ge 2$. If
		\vspace{-0.5em}
		\begin{itemize}
			\item[--] $P$ is regular, and
			\item[--] both $P_{1, 2}$ and $P_{2, 2}$ are $d$-oscillators, and
			\item[--] either $P_{1, 2}$ is high above $P_{2, 2}$ or $P_{2, 2}$ is high above $P_{1, 2}$, and
			\item[--] $\pi(P) \subset \R^{d-1}$ is a $(d-1)$-oscillator, 
		\end{itemize}
		\vspace{-0.5em}
		then $P$ is a $d$-oscillator. 
	\end{enumerate}
	\vspace{-0.5em}
	
	For $d=2$ this definition coincides with the definition of Horton sets \cite{horton}. However, this definition differs from generalizations of Horton sets discussed in \cite{valtr92'} and \cite{conlon_lim}. The reason for the difference is that \cite{valtr92'} and \cite{conlon_lim} were both motivated by avoidance of large holes.
	
	\begin{proposition} \label{prop:oscillator}
		Suppose $P = \{p_1, \dotsc, p_m\} \subset \R^d$ is a $d$-oscillator. 
		\begin{enumerate}[label=(\roman*), ref=(\roman*)] 
			\item \label{osci:section}
			For any integers $a, b$ with $1 \le a \le b \le m$, the set $P_{[a, b]} \eqdef \{p_a, p_{a+1}, \dotsc, p_b\}$ is a $d$-oscillator. 
			\item \label{osci:subset} 
			For any integers $N \ge 2$ and $r \in [N]$, the set $P_{r, N}$ is a $d$-oscillator. 
		\end{enumerate}
	\end{proposition}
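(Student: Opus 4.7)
Both parts fall to the same nested induction: outer on $d$, inner on $m = |P|$. The base cases ($d = 1$ or $m \le 1$) are immediate from \ref{osci:dim1}. In the inductive step I verify the four clauses of \ref{osci:dim2} for the subset $Q$ (with $Q = P_{[a, b]}$ in (i) and $Q = P_{r, N}$ in (ii)). Regularity \ref{regular:axis_inj}--\ref{regular:genericpair} is inherited by any subset, and the projection clause is handled uniformly: $\pi(Q)$ equals the analogous subset of the $(d-1)$-oscillator $\pi(P)$ (since $\pi$ preserves ordering by first coordinate and commutes with both taking intervals and taking arithmetic progressions), so the outer inductive hypothesis in dimension $d - 1$ supplies the required $(d - 1)$-oscillator structure on $\pi(Q)$.

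For (i), the halves $(P_{[a, b]})_{1, 2}$ and $(P_{[a, b]})_{2, 2}$ are consecutive blocks inside $P_{1, 2}$ or $P_{2, 2}$, their membership being determined by the parity of $a$. Since each $P_{j, 2}$ has fewer than $m$ points, the inner inductive hypothesis applied to the appropriate $P_{j, 2}$ makes these halves $d$-oscillators. The high-above relation is hereditary on subsets, so whichever of $P_{1, 2}, P_{2, 2}$ lies above the other passes the property down to the halves.

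For (ii) the cleanest reduction splits on the parity of $N$. When $N$ is even, all of $r, r + N, r + 2N, \dotsc$ share the parity of $r$, so $P_{r, N}$ is itself an arithmetic progression of common difference $N/2$ inside $P_{j, 2}$; the inner inductive hypothesis applied to $P_{j, 2}$ identifies it as a $d$-oscillator, with the degenerate case $N = 2$ covered directly by \ref{osci:dim2} for $P$. When $N$ is odd the indices alternate parity, so the halves $(P_{r, N})_{1, 2} = P_{r, 2N}$ and $(P_{r, N})_{2, 2} = P_{r + N, 2N}$ lie in opposite parity classes of $P$; each is an arithmetic progression inside its respective $P_{j, 2}$ (a reduction of the even-$N$ type), hence a $d$-oscillator by the inner inductive hypothesis, and the high-above relation passes directly from $P_{1, 2}$ versus $P_{2, 2}$ by heredity. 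The only place where one has to look carefully is this parity bookkeeping in (ii): odd $N$ is precisely what puts the two halves of $P_{r, N}$ on opposite sides of the $\{P_{1, 2}, P_{2, 2}\}$ split, which is what delivers the high-above relation for free.
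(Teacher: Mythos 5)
Your proof is correct and follows essentially the same strategy as the paper's: an outer induction on $d$, verifying the clauses of \ref{osci:dim2} by identifying the two halves of the subset in question as arithmetic-progression or consecutive-block subsets of $P_{1,2}$ and $P_{2,2}$, with a parity case split determining which half lands where. The only difference is bookkeeping: you run a single inner induction on $|P|$ for both parts, whereas the paper inducts on $|P_{[a,b]}|$ in part \ref{osci:section} and, in part \ref{osci:subset}, on $|P_{r,N}|$ for odd $N$ together with a separate reduction from even $N$ to $N/2$; your unified measure is a touch cleaner, but the underlying argument is the same.
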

	
	\begin{proof}
		We prove both statements by induction on $d$. The $d=1$ base case is trivial by \ref{osci:dim1}. 
		
		If \ref{osci:section} is proved for $d-1$, then $\pi(P_{[a, b]}) = \bigl(\pi(P)\bigr)_{[a, b]}$ is a $(d-1)$-oscillator. By \ref{osci:dim2}, it suffices to show that the sets $A \eqdef (P_{[a, b]})_{1, 2}$ and $B \eqdef (P_{[a, b]})_{2, 2}$ are both $d$-oscillators and that either $A$ is high above $B$ or $B$ is high above~$A$. To see this, we induct on $|P_{[a, b]}|$. The $|P_{[a, b]}| = 1$ case is trivial by \ref{osci:dim1}, and the inductive step is done by noticing that $A$ and $B$ are consecutive sections of $P_{1, 2}$ and $P_{2, 2}$, respectively, and that $\max\{|A|, |B|\} < |P_{[a, b]}|$. 
		
		If \ref{osci:subset} is proved for $d-1$, then $\pi(P_{r, N}) = \bigl(\pi(P)\bigr)_{r, N}$ is a $(d-1)$-oscillator. By \ref{osci:dim2}, it suffices to show that $A \eqdef (P_{r, N})_{1, 2}, \, B \eqdef (P_{r, N})_{2, 2}$ are $d$-oscillators and that $A$ is high above $B$ or vice versa. 
		\vspace{-0.5em}
		\begin{itemize}
			\item If $N$ is odd, then we induct on $|P_{r, N}|$. To be specific, in the inductive hypothesis $N$ is fixed while $r \in [N]$ is arbitrary. The $|P_{r, N}| = 1$ case is trivial. 
			\vspace{-0.5em}
			\begin{itemize}
				\item If $r$ is odd, then $A = (P_{1, 2})_{\frac{r+1}{2}, N}, \, B = (P_{2, 2})_{\frac{r+N}{2}, N}$, and so $A$ is high above $B$ or $B$ is high above $A$. Since $\max\{|A|, |B|\} < |P_{r, N}|$, the inductive proof is complete. 
				\vspace{-0.25em}
				\item If $r$ is even, then $A = (P_{2, 2})_{\frac{r}{2}, N}, \, B = (P_{1, 2})_{\frac{r+N+1}{2}, N}$, and so $A$ is high above $B$ or $B$ is high above $A$. Since $\max\{|A|, |B|\} < |P_{r, N}|$, the inductive proof is complete. 
			\end{itemize}
			\vspace{-0.5em}
			\vspace{-0.5em}
			\item For even values of $N$, it suffices to prove that the statement ``\emph{$A$ and $B$ are both $d$-oscillators and either $A$ is high above $B$ or $B$ is high above $A$}'' holds for $\frac{N}{2}$ implies that the same holds for~$N$. This follows from $P_{r, N} = (P_{1, 2})_{\frac{r+1}{2}, \frac{N}{2}}$ when $r$ is odd and $P_{r, N} = (P_{2, 2})_{\frac{r}{2}, \frac{N}{2}}$ when $r$ is even. 
		\end{itemize}
		\vspace{-0.5em}
		The inductive proof is complete by combining the cases above. 
	\end{proof}
	
	For any finite point set $P \subset \R^d$, a bijection $\tau \colon P \to P'$ is called a \emph{$\delta$-perturbation} if $\|\tau(p) - p\| \le \delta$ for every $p \in P$. The following fact is obvious: 
	\begin{observation} \label{obs:perturbation}
		For any $d$-oscillator $P \subset \R^d$, there exists $\delta_0 > 0$ such that for every $\delta_0$-perturbation $\tau$ the set $\tau(P)$ is a $d$-oscillator as well. 
	\end{observation}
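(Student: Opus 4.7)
The plan is to unwind the recursive definition of a $d$-oscillator and observe that every condition appearing in it is an \emph{open} condition, i.e.\ is defined by a finite conjunction of strict inequalities in the coordinates of the points of $P$. Since a finite intersection of open sets is open, the set of point configurations that are $d$-oscillators (with a prescribed ordering of coordinates) is open in $(\R^d)^{|P|}$, and hence some $\delta_0 > 0$ works. The argument is carried out by a double induction on the pair $(d,|P|)$ ordered lexicographically, with base cases \ref{osci:dim1} (where $d=1$ or $|P|=1$) being immediate, since the empty conjunction is vacuously open.

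For the inductive step, I would check openness condition-by-condition against the definition~\ref{osci:dim2}. The regularity axioms \ref{regular:axis_inj}, \ref{regular:pi_general}, \ref{regular:genericpair} each amount to finitely many non-vanishing determinants (injectivity of coordinate projections, non-degeneracy of $d$-tuples after applying $\pi$, and transversality of the affine spans of the two halves of each generic pair), each of which is preserved under sufficiently small perturbations. The ``high above'' condition asserts, for each generic pair $(S,T)$, a strict inequality $h(s) > h(t)$ between the $d$-th coordinates of two points $s,t$ obtained as continuous (rational) functions of the coordinates of $S \cup T$; hence it too is stable under small enough perturbations.

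The recursive clauses ``$P_{1,2}$ and $P_{2,2}$ are $d$-oscillators'' and ``$\pi(P)$ is a $(d-1)$-oscillator'' are handled directly by the inductive hypothesis: $P_{1,2}$ and $P_{2,2}$ are strictly smaller than $P$ at dimension $d$, so each admits some $\delta_1, \delta_2 > 0$; and $\pi(P) \subset \R^{d-1}$ is at lower dimension, so it admits some $\delta_3 > 0$, with the crucial point that a $\delta$-perturbation of $P$ in $\R^d$ induces a $\delta$-perturbation of $\pi(P)$ in $\R^{d-1}$. Setting $\delta_0$ to be the minimum of $\delta_1, \delta_2, \delta_3$ and of the tolerances coming from the finitely many open regularity and ``high above'' inequalities then gives the required bound.

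The only point requiring a bit of care—and arguably the main ``obstacle''—is making sure that the ``high above'' quantities depend continuously on the points of $P$. This is why regularity is enforced first: under \ref{regular:pi_general} and \ref{regular:genericpair}, the intersection points $q_1 \in \spa(T) \cap \ipi(q^*)$ and $q_2 \in \spa(S) \cap \ipi(q^*)$ that appear in the comparison of heights are defined by non-degenerate linear systems, and their coordinates are therefore smooth functions of the coordinates of $S \cup T$ on the open set where the regularity conditions hold. So the strict inequality $h(q_2) > h(q_1)$ is indeed stable, completing the induction.
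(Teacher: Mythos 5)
The paper states this observation without proof, introducing it with ``The following facts are obvious.'' Your proposal supplies a correct argument for why it is obvious, and it is essentially the only natural one: double induction on $(d,|P|)$ lexicographically, observing that every condition in \ref{osci:dim1} and \ref{osci:dim2} is an open condition on the configuration space $(\R^d)^{|P|}$ once the ordering by first coordinate is fixed. The key points are all in place: the regularity axioms \ref{regular:axis_inj}--\ref{regular:genericpair} are finite conjunctions of non-vanishing determinants; the ``high above'' comparison $h(s)>h(t)$ is a strict inequality between quantities that are rational (hence continuous) functions of the coordinates on the open locus where \ref{regular:genericpair} holds, so it is likewise stable; and the recursive clauses for $P_{1,2}$, $P_{2,2}$, and $\pi(P)$ reduce to the inductive hypothesis at smaller $|P|$ (same $d$) or smaller $d$, with the projection $\pi$ being $1$-Lipschitz so that a $\delta$-perturbation of $P$ induces a $\delta$-perturbation of $\pi(P)$. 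The one point I would make more explicit in a final write-up is that, for $\delta_0$ small enough (in particular smaller than half the minimum gap of first coordinates, which is positive by \ref{regular:axis_inj}), the perturbation preserves the ordering of $P$ by $\pi_1$, so the identification of $P_{1,2}$ and $P_{2,2}$ is preserved and the induced map on $\pi(P)$ is indeed a well-defined perturbation; you allude to this with ``prescribed ordering'' but it deserves a sentence.
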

	
	\paragraph{Existence of oscillators.} We are to construct $d$-oscillators of arbitrarily large size recursively on dimension $d$. For any integer $N \ge 0$, write it in binary as $N = \sum_{k \ge 0} a_k \cdot 2^k \, (a_k \in \{0, 1\})$. Set 
	\[
	(N)_{\veps} \eqdef \sum_{k \ge 0} a_k \veps^{k+1}. 
	\]
	The idea is to start with the set $\bigl\{ \bigl(k, (k)_{\veps_2}, \dotsc, (k)_{\veps_d}\bigr) : k \in [m] \bigr\}$, where the constants $\veps_2, \dotsc, \veps_d$ are chosen to be $1 \gg \veps_2 \gg \dotsb \gg \veps_d > 0$, and then to perturb it properly to ensure the regularity. 
	
	A bijection $\tau \colon P \to P'$ is called a $\delta$-\emph{height-perturbation} if $\pi(\tau(p)) = \pi(p)$ and $|h(\tau(p))-h(p)| \le \delta$ for every $p \in P$. Moreover, we say that $\tau$ is a \emph{regular $\delta$-height-perturbation} if $\tau(P) = P'$ is regular. 
	
	Any $P \subset \R^1$ is regular, and any $\delta$-height-perturbation is a $\delta$-perturbation. Note that the points in the next lemma are indexed from~$0$, not~$1$. This is done to make the map $k \mapsto (k)_{\veps}$ well-behaved. 
	
	\begin{lemma} \label{lem:osci_lifting}
		Suppose $d \ge 2$. For any generic $(d-1)$-oscillator $P^* \eqdef \{p_0^*, p_1^*, \dotsc, p_{\ell}^*\} \subset \R^{d-1}$ and every sufficiently small $\veps > 0$, there exists $\delta > 0$ such that every regular $\delta$-height-perturbation of
		\[
		P \eqdef \big\{\bigl(p_k^*, (k)_{\veps}\bigr) : k \in 0, 1, \dotsc, \ell\big\}
		\]
		is a $d$-oscillator. Here $\bigl(p_k^*, (k)_{\veps}\bigr)$ is obtained by appending a $d$-th coordinate $(k)_{\veps}$ to $p_k^*$.
	\end{lemma}
	
	\begin{proof}
		Write $p_k \eqdef \bigl(p_k^*, (k)_{\veps}\bigr)$. We induct on $\ell \in \Z_{\ge 0}$. The $\ell = 0$ base case is obvious by \ref{osci:dim1}. 
		
		Assume that the statement is established for $\ell-1$, where $\ell \ge 1$. Let
		\[
		Q \eqdef \{q_0, q_1, \dotsc, q_{\ell}\}
		\]
		be a regular $\delta$-height-perturbation of $P$ with $\pi(q_k) = p_k^*$ for $k = 0, 1, \dotsc, \ell$. For sufficiently small $\veps > 0$ and $\delta > 0$, we claim that $Q_{2, 2}$ is high above $Q_{1, 2}$, and that both $Q_{2, 2}$ and $Q_{1, 2}$ are $d$-oscillators. Recall that $Q_{2, 2} = \{q_0, q_2, q_4, \dotsc\}$ and $Q_{1, 2} = \{q_1, q_3, q_5, \dotsc\}$. 
		
		We first verify that $Q_{2, 2}$ is high above $Q_{1, 2}$ by specifying $\veps$. Let $(S, T)$ be a pair of subsets of $Q$ with $S \subseteq Q_{2, 2}$ and $T \subseteq Q_{1, 2}$ such that $\bigl( \pi(S), \pi(T) \bigr)$ is a generic pair of $\pi(Q)$. Since $\pi(Q) = P^*$ is regular, hence generic by \ref{regular:pi_gen_dup}, we see that $\pi\bigl(\spa(S)\bigr) \cap \pi\bigl(\spa(T)\bigr)$ consists of a single point $q^*_{S, T}$. Moreover, it is important to notice that the point $q_{S, T}^*$ is independent on $\veps$. Define
		\[
		\{q_{S, T}^S\} \eqdef \spa(S) \cap \pi^{-1} (q^*_{S, T}), \qquad \{q_{S, T}^T\} \eqdef \spa(T) \cap \pi^{-1} (q^*_{S, T}). 
		\]
		Observe that $h(q_{S, T}^S) = \veps + O(\veps^2+\delta)$ and $h(q_{S, T}^T) = O(\veps^2+\delta)$ as $\veps \to 0^+$. Thus, if we choose $\veps > 0$ to be sufficiently small and $\delta \in (0, \veps^2)$, then $h(q_{S, T}^S) > h(q_{S, T}^T)$, and hence $Q_{2, 2}$ is high above $Q_{1, 2}$. 
		
		We then show that $Q_{1, 2}, Q_{2, 2}$ are $d$-oscillators. Define $h_k$ by $q_k - p_k= h_k e_d$ for $k = 0,1,\dotsc, \ell$. Then $|h_k| \le \delta$ for all $k$. By the inductive hypothesis, there exists some $\delta' > 0$ such that every regular $\delta'$-height-perturbation of $P^*_{1, 2}$ or $P^*_{2, 2}$ is a $d$-oscillator. Set $\ell_1 \eqdef \lceil \frac{\ell+1}{2} \rceil$, $\ell_2 \eqdef \lfloor \frac{\ell+1}{2} \rfloor$. Then 
		\begin{align*}
			\widetilde{P}_1 &\eqdef \{p_{2k}+\veps^{-1}h_{2k}e_d : k =0, 1, \dotsc, \ell_1-1\}, \\
			\widetilde{P}_2 &\eqdef \{p_{2k+1}+\veps^{-1}h_{2k+1}e_d : k = 0, 1, \dotsc, \ell_2-1\}
		\end{align*}
		are both $2$-oscillators as long as $\delta < \veps\delta'$. Consider the map $\varphi \colon (x_1, \dotsc, x_{d-1}, x_d) \mapsto (x_1, \dotsc, x_{d-1}, \veps x_d)$. Since $Q_{1, 2} = \varphi(\widetilde{P}_1)$ and $Q_{2, 2} = \varphi(\widetilde{P}_2) + \veps e_d$, it follows that $Q_{1, 2}, Q_{2, 2}$ are both $d$-oscillators. 
		
		The inductive proof is done by setting $\delta < \min\{\veps^2, \veps\delta'\}$.
	\end{proof}
	
	The natural next step is to recursively use \Cref{lem:osci_lifting} to construct oscillators in every dimension. To do so, we must show the existence of regular $\delta$-height-perturbations, which is what we do next.
	
	\begin{lemma} \label{lem:reg_perturb}
		Suppose $d \ge 2$. Let $P^* \eqdef \{p_1^*, \dotsc, p_{\ell}^*\} \subset \R^{d-1}$ be a $(d-1)$-oscillator and $h_1, \dots, h_{\ell}\in \R$ be arbitrary heights. Then for any $\delta > 0$, there exists a regular $\delta$-height-perturbation of
		\[
		P \eqdef \big\{(p_k^*, h_k) : k \in 1, \dotsc, \ell\big\}. 
		\]
	\end{lemma}
	
	\begin{proof}
		Denote by $p_i \eqdef (p_i^{*}, h_i)$ the $i$-th point of~$P$. Let $P_i \eqdef \{p_1, \dots, p_i\}$. Note that $P_{\ell}=P$. To define a regular $\delta$-height-perturbation $\tau$ of $P$, we specify what it does to $p_1, p_2, \dotsc, p_{\ell}$ in order. 
		
		Suppose $\tau(p_1), \dotsc, \tau(p_i)$ have been defined. Write $p_j' \eqdef \tau(p_j)$ for $j \leq i$ and $P_i' \eqdef \{p_1, \dotsc, p_i'\}$. 
		
		Call a point $p' \in \ipi(p_{i+1}^*)$ \emph{bad} if the set $P_i'\cup \{p'\}$ is not regular. It then suffices to show that the number of bad points is finite. Indeed, this implies that there exists $h_{i+1}' \in (h_{i+1} - \delta, h_{i+1} + \delta)$ such that $p_{i+1}' \eqdef (p_{i+1}^*, h_{i+1}')$ is not bad, and hence $P_{i+1}' \eqdef P_i' \cup \{p_{i+1}'\}$ is a regular $\delta$-height-perturbation of $P_{i+1}$. By taking $\tau(p_{i+1}) \eqdef p_{i+1}'$, this concludes the inductive construction of $\tau$.
		
		In \Cref{append:bad} we will prove that every bad point is the intersection of the vertical line $\ipi(p_{i+1}^*)$ with some hyperplane not containing any vertical line, and the number of such hyperplanes is finite. This implies the finiteness claim on the number of bad points, and hence concludes the proof. 
	\end{proof}
	
	Now we are ready to construct oscillators in any dimension, by induction on the dimension.
	
	\begin{proposition} \label{thm:oscillator_dup}
		For any positive integer $\ell$ and any $\veps > 0$, there exists a $d$-oscillator which is an $\veps$-perturbation image of the set $L \eqdef \{ke_1 : k \in [\ell]\}$, where $e_1 \eqdef (1, 0, \dotsc, 0)$. 
	\end{proposition}
	
	\begin{proof}
		For $t = 1, \dots, d$, let $\veps_t' \eqdef \frac{\veps}{2^{t+2}}$ and $L^t \subset \R^t$ be the projection of $L$ onto its first $t$ coordinates. 
		
		Start with the $1$-oscillator $L_1 \eqdef [\ell]$. By \Cref{lem:osci_lifting}, there are $\veps_1 < \veps_1'$ and $\delta_1 > 0$ such that any regular $\delta_1$-height-perturbation of $L_2^- \eqdef \bigl\{\bigl(k, (k)_{\veps_1}\bigr) : k \in [\ell]\bigr\}$ is a $2$-oscillator. Set $\delta_1' \eqdef \min\{\delta_1, \veps_1'\}$. By \Cref{lem:reg_perturb}, there exists a $2$-oscillator $L_2$ which is a regular $\delta_1'$-height-perturbation of $L_2^-$. Then $L_2$ is a $(2\veps_1' + \delta_1')$-perturbation (because $(k)_{\veps_1} < 2\veps_1$), hence a $(3\veps_1')$-perturbation of $L^2$. 
		
		For $t = 3, \dots, d$, suppose the $(t-1)$-oscillator $L_{t-1}$, a $(3\veps_1' + \dots + 3\veps_{t-2}')$-perturbation of $L^{t-1}$, has already been generated. Then through the same process as in the construction of $L_2$ we may build up a $t$-oscillator $L_t$ which is a $(3\veps_1' + \dots + 3\veps_{t-1}')$-perturbation of $L^t$. Finally, the $d$-oscillator $L_d$ is a $(3\veps_1' + \dots + 3\veps_{d-1}')$-perturbation, hence an $\veps$-perturbation of $L^d = L$, as desired. 
	\end{proof}
	
	\paragraph{Convex independent sets among oscillators.} 
	We next show that oscillators do not contain large convex independent subsets. 
	
	For any $d$-oscillator $P = \{p_1, p_2, \dotsc\} \subset \R^d \ (|P| < \infty)$ and any nonempty convex independent~subset $C = \{p_{i_1}, \dotsc, p_{i_k}\} \subset P$ with $i_1 < \dotsb < i_k$, we define $\str_P(C) \eqdef i_k - i_1$ as the \emph{stretch} of $C$ in~$P$. Recall that $\pi_1(p_1) < \pi_1(p_2) < \dotsb$. Let $\mathcal{O}_d$ be the set of all $d$-oscillators in $\R^d$. Define for any $P \in \cO_d$
	\begin{align*}
		\cC_k(P) &\eqdef \Bigl\{C \in \tbinom{P}{k} : \text{$C$ is convex independent} \Bigr\}, \\
		\cC'_k(P) &\eqdef \Bigl\{C \in \tbinom{P}{k} : \text{$C$ is a convex cup or cap} \Bigr\}, 
	\end{align*}
	where $\binom{P}{k}$ refers to the set of all $k$-element subsets of $P$. We also define
	\begin{align*}
		\Delta_d(k) &\eqdef \min_{P \in \cO_d} \min_{C \in \cC_k(P)} \str_P(C), \\
		\Delta'_d(k) &\eqdef \min_{P \in \cO_d} \min_{C \in \cC'_k(P)} \str_P(C), 
	\end{align*}
	with the convention that $\displaystyle \min_{C \in \varnothing} \str_P(C) = +\infty$. Here the reason of writing ``$\min$'' instead of ``$\inf$'' is that $\str_P(C)$ is always a non-negative integer. Since every subset of $\R^d$ containing at most $d+1$ points is trivially convex independent, we have $\Delta_d(k) = \Delta'_d(k) = k-1$ for $k = 1, \dotsc, d+1$. 
	
	The quantities $\Delta_d(k)$ and $\Delta'_d(k)$ measure the minimum stretch in $d$-oscillators. We first establish that $\Delta_d(k)$ and $\Delta'_d(k)$ are not far apart, and then use it to bound both from below.
	\begin{proposition} \label{prop:Delta_bound}
		We have $\Delta'_d(\lceil k/2 \rceil) \le \Delta_d(k) \le \Delta'_d(k)$ for all $d \ge 2$ and all $k \ge 1$. 
	\end{proposition}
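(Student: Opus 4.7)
The plan is to prove the two inequalities $\Delta_d(k)\le\Delta'_d(k)$ and $\Delta'_d(\lceil k/2\rceil)\le\Delta_d(k)$ separately.

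For the upper bound $\Delta_d(k)\le \Delta'_d(k)$, the key observation is the containment $\cC'_k(P)\subseteq \cC_k(P)$ for every $d$-oscillator $P$: every regular convex cup or cap is itself convex independent. Granting this, the minimum defining $\Delta'_d(k)$ ranges over a subset of the set defining $\Delta_d(k)$, so $\Delta'_d(k)\ge \Delta_d(k)$. The containment is a standard consequence of Carath\'eodory's theorem together with \ref{regular:pi_general}: if $p\in\conv(C\setminus\{p\})$ for a convex cup $C$, then affine general position of $\pi(C)$ places $\pi(p)$ in the relative interior of a unique $d$-vertex simplex $\{\pi(p_{i_1}),\dots,\pi(p_{i_d})\}$, and the cup inequality puts $h(p)$ strictly below the affine hyperplane through $p_{i_1},\dots,p_{i_d}$. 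One then checks that this hyperplane is a lower support of $\conv(C\setminus\{p\})$ over $\pi(p)$, contradicting $p\in\conv(C\setminus\{p\})$; the cap case is symmetric.

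For the lower bound $\Delta'_d(\lceil k/2\rceil)\le \Delta_d(k)$, fix any $d$-oscillator $P$ and any $C\in\cC_k(P)$. The conditions \ref{regular:axis_inj}, \ref{regular:pi_general}, \ref{regular:genericpair} are inherited by subsets, so $C$ is regular, and \Cref{prop:cupcap_decomp} produces a decomposition $C=C_A\cup C_B$ with a convex cap $C_A$ and a convex cup $C_B$. From $|C_A|+|C_B|\ge |C|=k$, pigeonhole gives $\max(|C_A|,|C_B|)\ge\lceil k/2\rceil$; without loss of generality, $|C_A|\ge\lceil k/2\rceil$. Choose any $C_A'\subseteq C_A$ with $|C_A'|=\lceil k/2\rceil$ that contains both the leftmost and the rightmost points of $C_A$ in the $\pi_1$-order induced by $P$. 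Since both the cap inequality and regularity pass to subsets, $C_A'\in\cC'_{\lceil k/2\rceil}(P)$, and by the choice of endpoints $\str_P(C_A')=\str_P(C_A)\le\str_P(C)$. The definition of $\Delta'_d$ gives $\str_P(C_A')\ge\Delta'_d(\lceil k/2\rceil)$, whence $\str_P(C)\ge\Delta'_d(\lceil k/2\rceil)$. Taking the infimum over $P$ and $C$ completes the bound.

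The only slightly nontrivial ingredient is verifying that regular convex cups and caps are convex independent, which is a standard polytope fact. The remainder is a direct pigeonhole consequence of \Cref{prop:cupcap_decomp} combined with the hereditary nature of the convex cup/cap property.
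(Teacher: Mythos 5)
Your proof is correct and takes essentially the same approach as the paper's one-paragraph argument: the containment $\cC'_k(P)\subseteq\cC_k(P)$ gives the upper bound, and \Cref{prop:cupcap_decomp} together with pigeonhole gives the lower bound. The only difference is that you spell out two steps the paper leaves implicit (why a regular \cuap\ is convex independent, and the trimming of the larger of $C_A,C_B$ down to exactly $\lceil k/2\rceil$ points while preserving its stretch).
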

	
	\begin{proof}
		The upper bound is directly implied by the inclusion $\cC'_k(P) \subseteq \cC_k(P)$. For the lower bound, assume $\Delta_d(k) = \str_P(C)$, where $P \in \cO_d$ and $C \in \cC_k(P)$. By \Cref{prop:cupcap_decomp}, there exist a convex cap $C_A$ and a convex cup $C_B$ such that $C = C_A \cup C_B$, and hence $|C_A| \ge \lceil k/2 \rceil$ or $|C_B| \ge \lceil k/2 \rceil$ (say $\abs{C_A}\ge \lceil k/2 \rceil$). It then follows that $\Delta'_d(\lceil k/2 \rceil) \le \str_P(C_A) \le \str_P(C) = \Delta_d(k)$.
	\end{proof}
	
	\begin{theorem} \label{thm:expstretch}
		For any $d \ge 2$ and any $k \ge 2$, we have that 
		\[
		\Delta'_d(k) \ge \frac{1}{2} \exp\Biggl(\frac{k^{\frac{1}{d-1}}}{4d-4}\Biggr), \qquad \Delta_d(k) \ge \frac{1}{2} \exp\Biggl(\frac{k^{\frac{1}{d-1}}}{4d}\Biggr). 
		\]
	\end{theorem}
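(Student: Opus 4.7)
I plan to prove the first bound, $\Delta'_d(k) \ge \Phi_d(k) := \tfrac12 \exp\!\bigl(k^{1/(d-1)}/(4d-4)\bigr)$, by double induction with the dimension $d$ as the outer variable and $k$ as the inner variable, exploiting the self-similar decomposition $P = P_{1,2} \cup P_{2,2}$ of $d$-oscillators together with \Cref{thm:cupcap_intpoint}. Once this is in hand, the $\Delta_d$ bound will follow from \Cref{prop:Delta_bound}: $\Delta_d(k) \ge \Delta'_d(\lceil k/2 \rceil) \ge \tfrac12 e^{(k/2)^{1/(d-1)}/(4d-4)}$, and the comparison $(k/2)^{1/(d-1)}/(4d-4) \ge k^{1/(d-1)}/(4d)$ reduces to $(1 + 1/(d-1))^{d-1} \ge 2$, which holds for every $d \ge 2$ with equality at $d=2$.

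The outer base $d=2$ is essentially Horton's argument. A $2$-oscillator splits as $P = A \cup B$ with one piece high above the other, and every convex \cuap $C \subseteq P$ of size $k\ge 2$ has $|\partial_\pi C|=2$. \Cref{thm:cupcap_intpoint} thus places at least $k-2$ points of $C$ in a single piece, say $B$; since the elements of $B$ occupy every second position of $P$, stretches in $B$ are at most half those in $P$, and we obtain $\str_P(C) \ge 2\Delta'_2(k-2)$. Combined with the trivial bound $\Delta'_2(k)\ge k-1$ used for the base cases $k\in\{2,3\}$, induction on $k$ yields $\Delta'_2(k)\ge e^{(k-2)/4} \ge \tfrac12 e^{k/4}$, since $\sqrt{e}<2$.

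For the outer inductive step $d\ge 3$, let $C$ be a convex \cuap of size $k$ inside a $d$-oscillator $P$, and write $m = \str_P(C)$ and $b = |\partial_\pi C|$. Two estimates on $m$ drive the proof. First, general position \ref{regular:pi_general} makes $\pi(\partial_\pi C)$ exactly the vertex set of $\conv(\pi(C))$, hence convex independent in the $(d-1)$-oscillator $\pi(P)$; since $\pi$ preserves indices and stretches, the outer IH yields
\[
m \;\ge\; \Delta_{d-1}(b) \;\ge\; \tfrac12\exp\!\bigl(b^{1/(d-2)}/(4d-4)\bigr).
\]
Second, if $b < k$, then \Cref{thm:cupcap_intpoint} confines $C \setminus \partial_\pi C$ to $P_{1,2}$ or $P_{2,2}$; the halving of stretches in the chosen sub-oscillator together with the inner IH then gives, whenever $k-b\ge 2$,
\[
m \;\ge\; 2\Delta'_d(k-b) \;\ge\; \exp\!\bigl((k-b)^{1/(d-1)}/(4d-4)\bigr).
\]

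Assuming toward contradiction that $m < \Phi_d(k)$, the first inequality forces $b < k^{(d-2)/(d-1)}$, after which the elementary estimate $(1-x)^{1/(d-1)} \ge 1-x$ on $[0,1]$ yields $(k-b)^{1/(d-1)} \ge k^{1/(d-1)}-1$; plugging into the second bound and using $1/(4d-4) \le \ln 2$ produces $m \ge \Phi_d(k)$, the desired contradiction. The main obstacle is bookkeeping the edge cases where one of the two bounds is unavailable or trivial. If $b=k$---which in particular covers the range $k \le d$, since $\pi(C)$ is then affinely independent in $\R^{d-1}$---only the first bound applies, but it suffices because $k^{1/(d-2)} \ge k^{1/(d-1)}$. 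If $k-b=1$, which forces $k \ge d+1$, the recursion becomes trivial and one falls back on the first bound, reducing the required inequality to $(k-1)^{d-1} > k^{d-2}$, i.e.\ $k(1-1/k)^{d-1} > 1$; this is easily verified for $k \ge d+1,\ d\ge 3$, completing the induction.
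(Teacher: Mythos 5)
Your proof is correct and follows essentially the same route as the paper's: an outer induction on $d$ and an inner induction on $k$, driven by the two stretch estimates $\str_P(C)\ge\Delta_{d-1}(|\partial_\pi C|)$ and $\str_P(C)\ge 2\Delta'_d(k-|\partial_\pi C|)$ (the latter via \Cref{thm:cupcap_intpoint}), and then passing from $\Delta'_d$ to $\Delta_d$ by \Cref{prop:Delta_bound}. The only differences are cosmetic: you organize the inner step as a proof by contradiction and dispose of the edge cases $b=k$ and $k-b=1$ by hand, whereas the paper dispenses with all small $k$ at once by noting the bound is below $1$ whenever $k\le(2d-2)^{d-1}$ and then argues directly by cases; your use of the cruder $(1-x)^{1/(d-1)}\ge 1-x$ in place of the paper's $(1-x)^r\ge 1-2rx$ is fine since $1/(4d-4)\le\ln 2$.
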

	
	\begin{proof}
		We induct on $d$. Notice that $\str_P(C) = 2\str_{P_{t, 2}}(C)$ if $C \subseteq P_{t, 2}$ for some $t \in \{1, 2\}$. 
		
		Suppose $d = 2$. Let $P \in \cO_2$ and $C \in \cC'_k(P)$ with $k \ge 2$ being arbitrary. Since $\partial_{\pi} C \subset \R^1$ is convex independent, it follows from \ref{regular:axis_inj_dup} that $|\partial_{\pi} C| = 2$, and hence $|C \setminus \partial_{\pi} C| = k-2$. By \Cref{thm:cupcap_intpoint}, we may assume without loss of generality that $C \setminus \partial_{\pi} C \subseteq P_{1, 2}$, and hence 
		\[
		\str_P(C) \ge \str_{P}(C \setminus \partial_{\pi} C) = 2 \str_{P_{1, 2}}(C \setminus \partial_{\pi} C) \ge 2\Delta'_2(k-2). 
		\]
		This together with the facts $\Delta'_2(2) = 1, \, \Delta'_2(3) = 2$ imply that $\str_P(C) \ge \frac{1}{2} \bigl(\sqrt{2}\bigr)^k \ge \frac{1}{2} \exp\bigl(\frac{k}{4}\bigr)$. So, $\Delta'_2(k) \ge \frac{1}{2} \exp\bigl(\frac{k}{4}\bigr)$, and hence $\Delta_2(k) \ge \frac{1}{2} \exp\bigl(\frac{k}{8}\bigr)$ by \Cref{prop:Delta_bound}. The $d = 2$ case is done. 
		
		Assume both inequalities hold for $d-1 \, (d \ge 3)$ and all $k \ge 2$. Consider the $d$ case. Let $P \in \cO_d$ and $C \in \cC'_k(P)$ with $k \ge 2$ being arbitrary. Set $m \eqdef |\partial_{\pi} C|$, and hence $m \ge 2$. We claim that 
		\begin{equation} \label{eq:stretchbound}
			\str_P(C) \ge \max \bigl\{\Delta_{d-1}(m), \, 2\Delta'_d(k-m)\bigr\}. 
		\end{equation}
		On one hand, from $\pi(\partial_{\pi} C) \in \cC_m\bigl(\pi(P)\bigr)$ we deduce that 
		\begin{equation} \label{eq:stretch1}
			\str_P(C) \ge \str_P(\partial_{\pi} C) \ge \str_{\pi(P)} \bigl(\pi(\partial_{\pi} C)\bigr) \ge \Delta_{d-1}(m). 
		\end{equation}
		On the other hand, by assuming $C \setminus \partial_{\pi} C \subseteq P_{1, 2}$ without loss of generality (\Cref{thm:cupcap_intpoint}), we have 
		\begin{equation} \label{eq:stretch2}
			\str_P(C) \ge \str_P(C \setminus \partial_{\pi} C) = 2 \str_{P_{1, 2}}(C \setminus \partial_{\pi} C) \ge 2\Delta'_d(k-m). 
		\end{equation}
		Thus, our claimed estimate \eqref{eq:stretchbound} follows from \eqref{eq:stretch1} and \eqref{eq:stretch2}. 
		
		Now we are ready to prove 
		\begin{equation} \label{eq:Delta'}
			\str_P(C) \ge \frac{1}{2} \exp \Biggl( \frac{k^{\frac{1}{d-1}}}{4d-4} \Biggr)
		\end{equation}
		by induction on $k$. Note that $\frac{1}{2}\exp \Bigl( \frac{k^{\frac{1}{d-1}}}{4d-4} \Bigr) < 1$ when $k \le (2d-2)^{d-1}$, and so \eqref{eq:Delta'} holds trivially. We assume $k > (2d-2)^{d-1}$ then. For the inductive step, we consider two cases separately: 
		\begin{itemize}
			\item If $m \ge k^{\frac{d-2}{d-1}}$, then from \eqref{eq:stretchbound} and the inductive hypothesis on $d$ we deduce that 
			\[
			\str_P(C) \ge \Delta_{d-1}(m) \ge \frac{1}{2} \exp\Biggl(\frac{m^{\frac{1}{d-2}}}{4d-8}\Biggr) \ge \frac{1}{2} \exp\Biggl(\frac{k^{\frac{1}{d-1}}}{4d-4}\Biggr). 
			\]
			\item If $m \le k^{\frac{d-2}{d-1}}$, then the assumptions $k > (2d-2)^{d-1}$ and $d \ge 3$ imply that $k - m > 2$. Indeed, 
			\[
			k - m \ge k - k^{\frac{d-2}{d-1}} \overset{(*)}{>} (2d-2)^{d-1} - (2d-2)^{d-2} = (2d-2)^{d-2} \cdot (2d-3) \ge 4 \times 3 > 2, 
			\]
			where at the step marked with $(*)$ we used that the function $f_{\alpha}(x) \eqdef x - x^{\alpha}$ is increasing on $(1, +\infty)$ for any $\alpha \in (0, 1)$, which can be seen by taking the derivative of $f_{\alpha}$. Hence, it follows from \eqref{eq:stretchbound} and the inductive hypothesis on $k$ that
			\begin{align*}
				\str_P(C) \ge 2\Delta'_d(k-m) &\ge \exp\Biggl(\frac{(k-m)^{\frac{1}{d-1}}}{4d-4}\Biggr) \ge \exp\left(\frac{k^{\frac{1}{d-1}} \cdot \Bigl(1-k^{-\frac{1}{d-1}}\Bigr)^{\frac{1}{d-1}}}{4d-4}\right) \\
				&\stackrel{(**)}{\ge} \exp\Biggl(\frac{k^{\frac{1}{d-1}} - \frac{2}{d-1}}{4d-4}\Biggr) \ge \frac{1}{2} \exp \Biggl(\frac{k^{\frac{1}{d-1}}}{4d-4}\Biggr), 
			\end{align*}
			where at the step marked with $(**)$ we used the inequality $(1-x)^r \ge 1-2rx$, which is valid for $0\leq r\leq 1$ and $0\leq x\leq \tfrac{1}{2}$, for the pair $(x,r) = \bigl(k^{-\frac{1}{d-1}},\frac{1}{d-1}\bigr)$. 
		\end{itemize}
		By combining the cases above, we conclude \eqref{eq:Delta'}. So, $\Delta'_{d}(k) \ge \frac{1}{2} \exp \Bigl( \frac{k^{\frac{1}{d-1}}}{4d-4} \Bigr)$, and hence 
		\[
		\Delta_d(k) \ge \frac{1}{2} \exp \Biggl( \frac{(k/2)^{\frac{1}{d-1}}}{4d-4} \Biggr) \ge \frac{1}{2} \exp \Biggl( \frac{k^{\frac{1}{d-1}}}{4d} \Biggr)
		\]
		by \Cref{prop:Delta_bound} and the inequality $2^{\frac{1}{d-1}} \le 1+\frac{1}{d-1}$. The inductive proof is complete. 
	\end{proof}
	
	\section{Proof of \texorpdfstring{\Cref{thm:upperbound}}{Theorem 2}} \label{sec:prf_upper}
	The sets that we will construct in the proof of \Cref{thm:upperbound} will be perturbations of lattice points inside suitable regions. Because of that, we start by recalling the basic facts about lattices. An interested reader is referred to \cite[Chapter~12]{pollack} for a more detailed exposition.
	
	Recall that a \emph{lattice} is a discrete subgroup of $\R^d$. Every lattice consists of $\Z$-linear combinations of linearly independent vectors $b_1,\dotsc,b_r$, called a \emph{basis} for the lattice. The \emph{rank} of a lattice is the size of (any one of) its bases. A \emph{fundamental region} of a lattice $\Gamma$ is any set of the form 
	\[
	\{\lambda_1 b_1+\dotsb+\lambda_r b_r : 0\leq \lambda_i\leq 1\text{ for each }i\}, 
	\]
	where $b_1, \dotsc, b_r$ is a basis of~$\Gamma$. The \emph{covolume} of a lattice $\Gamma$ of rank $r$ is the $r$-dimensional Lebesgue measure of (any one of) its fundamental regions. Denote by $\covol(\Gamma)$ the covolume of $\Gamma$. 
	
	\begin{proposition} \label{prop:poly_normvol}
		If $\Gamma$ is a lattice of rank $r$ and $\cP$ is a lattice polytope of $\Gamma$, then $r! \cdot \vol_{\Gamma}(\cP) \in \Z$. Here $\vol_{\Gamma}(P)$ refers to the normalized $r$-dimensional volume of $\cP$ in $\Gamma$, i.e., $\vol_{\Gamma}(\cP) \eqdef \frac{\vol(\cP)}{\covol(\Gamma)}$. 
	\end{proposition}
	
	\begin{proof}
		This is seen by combining \cite[Theorem~3.12]{beck_robins} and \cite[Corollary~3.21]{beck_robins}. 
	\end{proof}
	
	\begin{proposition} \label{prop:covol}
		Suppose $n_1, \dots, n_d, n'$ are integers with $\gcd(n_1, \dots, n_d) = 1$. Define a hyperplane
		\[
		\sigma \eqdef \{ (x_1, \dots, x_d) \in \R^d : n_1x_1 + \dots + n_dx_d = n'\}. 
		\]
		If $\Gamma \eqdef \Z^d \cap \sigma$, then $\Gamma$ is a translate of a lattice whose rank is $d-1$ and covolume is $\sqrt{n_1^2 + \dots + n_d^2}$. 
	\end{proposition}
	
	The proof of \Cref{prop:covol} is technically involved, and we include it in \Cref{append:covol}. 
	
	\smallskip
	Now we are ready to prove \Cref{thm:upperbound}. Let $G \eqdef [n]^d$. The core is to prove the following result: 
	
	\begin{theorem} \label{thm:upperbound_specified}
		For any $d \ge 2$ and any $\veps > 0$, there exists $\widetilde{G} \subset \R^d$ in general position, which is an $\veps$-perturbation of $G$, such that every convex independent subset $\widetilde{C} \subseteq \widetilde{G}$ satisfies $|\widetilde{C}| = O_d\Bigl(n^{\frac{d(d-1)}{d+1}}\Bigr)$. 
	\end{theorem}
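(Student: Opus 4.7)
The plan is to construct $\widetilde{G}$ so that the points of $G = [n]^d$ on each axis-parallel line, after perturbation, form a $d$-oscillator in $\widetilde{G}$, and then to bound convex independent subsets of $\widetilde{G}$ by combining the per-oscillator stretch bound of \Cref{thm:expstretch} with an Andrews-style lattice-counting argument that uses \Cref{lem:covol}. Applying \Cref{thm:oscillator} produces a $d$-oscillator of size $n$ as an arbitrarily small perturbation of an axis-parallel line in $G$, and by \Cref{obs:perturbation} one may arrange $n^{d-1}$ such copies, one per axis-parallel fiber of $G$, while keeping $\widetilde{G}$ in general position; if needed, the construction can be iterated so that lines parallel to several primitive lattice directions of bounded norm also become $d$-oscillators.

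Given a convex independent $\widetilde{C} \subseteq \widetilde{G}$, I would decompose it via \Cref{prop:cupcap_decomp} into a convex cap $\widetilde{C}_A$ and a convex cup $\widetilde{C}_B$, and bound each separately; without loss, focus on $\widetilde{C}_A$. For each axis-parallel line of $G$, the restriction of $\widetilde{C}_A$ to the corresponding $d$-oscillator of $\widetilde{G}$ is itself a convex cup or cap in that oscillator, and so by \Cref{thm:expstretch} contains at most $O_d\bigl((\log n)^{d-1}\bigr)$ points. \Cref{thm:cupcap_intpoint} is useful here to locate the ``interior'' points of $\widetilde{C}_A$ (those not in $\partial_\pi \widetilde{C}_A$) on one definite side of the recursive oscillator split, ensuring we do not double-count.

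To transform these per-line bounds into the desired $O_d(n^{d(d-1)/(d+1)})$ estimate, I would invoke an Andrews-style counting of lattice directions: the edges of $\conv(\widetilde{C}_A)$ are close to primitive lattice vectors in $\Z^d$, and by \Cref{lem:covol} the number of such primitive vectors of norm at most $R$ is $\Theta_d(R^d)$, while the same covolume formula controls the number of lattice lines in each primitive direction meeting $G$. Balancing the total perimeter $O_d(n)$ of $\conv(\widetilde{C}_A)$ against the per-line oscillator contribution should yield the announced exponent. The crux, and the main obstacle, is harmonizing these two types of estimates to produce exactly the exponent $d(d-1)/(d+1)$: a naive sum of $O_d\bigl((\log n)^{d-1}\bigr)$ over $n^{d-1}$ axis-parallel lines gives only $n^{d-1}(\log n)^{d-1}$, which is too weak, and the saving must come from the lattice-vector enumeration restricting which primitive directions can actually appear as edge directions of $\conv(\widetilde{C}_A)$.
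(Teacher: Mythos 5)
Your proposal identifies the right ingredients at a high level — oscillators from \Cref{thm:oscillator}, the stretch bound of \Cref{thm:expstretch}, counting of primitive lattice vectors, and a balancing step — but there are two genuine gaps that the paper's actual argument is built precisely to fill, and you yourself acknowledge the second one.

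First, the construction. You propose perturbing $G$ so that each \emph{axis-parallel} fiber becomes a $d$-oscillator, and vaguely suggest iterating so that lines in "several primitive lattice directions of bounded norm" also become oscillators. This is not strong enough: the facets of $\conv(C)$ can have arbitrary primitive normal vectors, with norms up to $\Theta(n)$, and the argument ultimately needs to control $C$ along lattice line segments in \emph{arbitrary} directions, not just a bounded collection. The paper achieves this with a structured perturbation $G' = S_1(P) + \dotsb + S_d(P)$, a Minkowski sum of suitably scaled and cyclically permuted copies of a single oscillator $P$. The hierarchy of scales $\eta_1 \gg \eta_2 \gg \dotsb \gg \eta_d$ guarantees that for \emph{any} lattice line segment $L$, a suitable affine map $\psi'$ carries the perturbed copy $\psi'(\widetilde L)$ into a small perturbation of a section $T_{\eta_m}(P)$, which is again a $d$-oscillator. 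Your fiber-by-fiber construction does not give this, and there is no obvious way to patch it, since a single perturbation of $G$ must work simultaneously for all directions.

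Second, the summation strategy. You try to bound $|\widetilde C|$ by summing per-line contributions over all $n^{d-1}$ axis-parallel lines, concede this gives only $n^{d-1}(\log n)^{d-1}$, and hope the lattice-direction counting restricts which edge directions of $\conv(\widetilde C_A)$ appear. The paper instead exploits that, after shrinking $\veps$, one may assume $\conv_0(C)\cap C = \varnothing$, so every point of $C$ lies on some \emph{facet} $F$ of the lattice polytope $\conv(C)$. The crucial intermediate bound is then facet-by-facet: $|C\cap F| = O_d\bigl(A_F^{\frac{d-2}{d-1}}\log A_F\bigr)$, proved by covering $F$ with a lattice parallelotope (a discrete John-type theorem from B\'ar\'any--Vershik) and slicing it into lattice lines. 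This is not the same as summing over all axis-parallel lines; only lines inside the facet's spanning lattice appear. The saving you were hoping for then comes from three estimates working in concert: the surface-area bound $\sum_F A_F n_F \lesssim n^{d-1}$, the covolume lower bound $n_F \geq \Omega(k^{1/d})$ (via \Cref{lem:covol} and the observation that distinct facets have distinct primitive normals, so one can sort $n_F$ against the sequence $u_k$), and H\"older's inequality with a carefully chosen exponent $\lambda \in \bigl(\frac{d-2}{d-1}, \frac{d}{d+1}\bigr)$. Your mention of "total perimeter $O_d(n)$" is also off — the relevant quantity is the surface area, which is $O_d(n^{d-1})$. Without the facet decomposition and the H\"older balancing, the per-line bounds alone do not yield the exponent $\frac{d(d-1)}{d+1}$, and the proposal stalls exactly where you said it would.
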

	
	\begin{proof}[Proof of \Cref{thm:upperbound} assuming \Cref{thm:upperbound_specified}]
		\Cref{thm:upperbound_specified} directly implies \Cref{thm:upperbound} with a weaker bound of $\sqrt{d}\cdot\sqrt[d]{n}$ in place of~$2\sqrt[d]{n}$. To deduce the stated result, we replace $\Z^d$ by a better lattice.
		
		Consider the densest lattice packing of unit balls in~$\R^d$. Let $\Lambda_d$ be the corresponding lattice, and let $\rho_d$ be its packing density, i.e., $\rho_d \eqdef \lim\limits_{R\to\infty} \frac{\abs{\Lambda_d \cap B_R} \cdot \vol(B_1)}{\vol(B_R)}$. Let $\varphi$ be an invertible affine mapping sending the standard integer lattice $\Z^d$ to~$\Lambda_d$. So, the set $\Lambda_d\cap B_R$ contains $R^d \rho_d \cdot \bigl(1+o(1)\bigr)$ many points, and satisfies $\diam(\Lambda_d\cap B_R)\leq R$. Take $P \eqdef \varphi(\widetilde{G})\cap B_R$. Then \Cref{thm:upperbound} follows from the bound $\rho_d\geq 2^{-d}$ (known as the Minkowski--Hlawka bound, see e.g.~\cite[Theorem 24.1]{gruber_book}). 
	\end{proof}
	
	We remark that no $n$-point set $P$ with $\diam(P) = \alpha \sqrt[d]{n}$ exists when $\alpha < \rho_d^{-\frac{1}{d}}$. This follows easily from the isodiametric inequality (\cite[Theorem 9.2]{gruber_book}) and the definition of~$\rho_d$. Better upper bounds and lower bounds on $\rho_d$ are known, see \cite{venkatesh} and \cite{cohn_zhao} (but they are not enough to improve \Cref{thm:upperbound}). 
	
	\begin{proof}[Proof of \Cref{thm:upperbound_specified}]
		Assume without loss of generality that $0<\veps<\frac{1}{2}$.
		
		The proof consists of three parts: We begin by constructing the perturbation $\widetilde{G}$ of the grid $G$. We then establish a crucial intermediate upper bound concerning the convex independent $\widetilde{C} \subseteq \widetilde{G}$. Finally, we derive the desired upper bound on $|\widetilde{C}|$ with the help of that intermediate bound. 
		
		\vspace{-1em}
		\paragraph{Constructing the perturbation.} By \Cref{thm:oscillator_dup}, there exists a $d$-oscillator $P = \nobreak\{p_1, \dotsc, p_n\}$ with $p_k = (k, p_{k, 2}, \dotsc, p_{k, d})$ such that $P$ is an $\veps$-perturbation of the set $L_0 \eqdef \{(k, 0, \dotsc, 0) : k \in [n]\}$. 
		
		Define, for any $\eta>0$, a linear transformation $T_{\eta}\colon \R^d\to \R^d$ by 
		\[
		T_{\eta}(x_1,x_2,\dotsc,x_d)\eqdef (x_1,\eta x_2,\dotsc,\eta x_d). 
		\]
		Since $T_{\eta}$ is an invertible diagonal linear transformation, $T_{\eta} (P)$ is a $d$-oscillator as well. By \Cref{obs:perturbation}, there exists $\delta > 0$ such that every $\delta$-perturbation of $P$ is a $d$-oscillator as well. Then every $\eta\delta$\nobreakdash-perturbation of $T_{\eta} (P)$ is a $d$-oscillator, as long as $0<\eta< 1$.
		
		For any $x = (x_1, \dotsc, x_d) \in \R^d$, we denote by $\sigma(x) \eqdef (x_d, x_1, \dotsc, x_{d-1})$ the cyclic permutation.
		Choose small constants $\eta_1,\dotsc,\eta_d$ decaying to zero, with $\eta_1 < \frac{\veps}{2}$ and each $\eta_{m+1}$ being much smaller than the preceding~$\eta_m$. 
		The precise conditions will be specified in the proof of \Cref{lem:line_bound}. Set
		\[
		S_i \eqdef \sigma^{i-1}T_{\eta_i}, \qquad G' \eqdef S_1(P) + S_2(P) + \dotsb + S_d(P). 
		\]
		Here by $A+B \eqdef \{a+b : a \in A, \, b \in B\}$ we denote the Minkowski sum of $A, B \subseteq \R^d$. 
		
		View $G'$ as an $\frac{\veps}{2}$-perturbation $f$ of $G$ in the natural way. That is, for each $g = (g_1, \dotsc, g_d) \in G$, 
		\[
		\left(
		\begin{array}{c}
			g_1' \\
			g_2' \\
			g_3' \\
			\vdots \\
			g_d'
		\end{array}
		\right)^{\top}
		= g' \eqdef f(g) = 
		\left(
		\begin{array}{ccccccccc}
			g_1 & + & \eta_2 p_{g_2,d} & + & \eta_3 p_{g_3,d-1} & + & \dotsb & + & \eta_d p_{g_d,2} \\
			\eta_1 p_{g_1,2} & + & g_2 & + & \eta_3 p_{g_3,d} & + & \dotsb & + & \eta_d p_{g_d,3} \\
			\eta_1 p_{g_1,3} & + & \eta_2 p_{g_2,2} & + & g_3 & + & \dotsb & + & \eta_d p_{g_d,4} \\
			&&&&&& \vdots && \\
			\eta_1 p_{g_1,d} & + & \eta_2 p_{g_2,d-1} & + & \eta_3 p_{g_3,d-2} & + & \dotsb & + & g_d
		\end{array}
		\right)^{\top}. 
		\] 
		We may also write this more succinctly as
		\[
		(g_1', g_2', \dotsc, g_d') = S_1(p_{g_1})+ S_2(p_{g_2})+\dotsb+S_d(p_{g_d}).
		\]
		Let $\tau$ be any sufficiently small perturbation of $G'$ such that $\widetilde{G} \eqdef \tau(G')$ is in general position. The precise meaning of `sufficiently small' will be specified later, in the proof of \Cref{lem:line_bound}. For now, we only demand that $\tau$ is an $\frac{\veps}{2}$-perturbation so the composition $\tau\circ f$ is an $\veps$-perturbation.
		
		With $\widetilde{G}$ being defined, our goal is to establish the upper bound on $|\widetilde{C}|$. For brevity, we write
		\[
		\bullet' = f(\bullet), \ \ \bullet = f^{-1}(\bullet'); \qquad \widetilde{\bullet} = \tau(\bullet'), \ \ \bullet' = \tau^{-1}(\widetilde{\bullet}). 
		\]
		Here $\bullet$, $\bullet'$, and $\widetilde{\bullet}$ are subsets of $G$, $G'$, and $\widetilde{G}$, respectively. 
		
		Consider the preimage $C \subseteq G$ of $\widetilde{C}$ under~$\tau\circ f$. Notice that $C = f^{-1} \bigl( \tau^{-1}(\widetilde{C}) \bigr)$. 
		
		Assume without loss of generality that $\veps$ is small enough in terms of $n$ such that the interior of $\conv(C)$ contains no point of $C$, i.e., $\conv_0(C) \cap C = \varnothing$. 
		
		Set $\bC \eqdef \conv(C)$. Then $\bC$ is a lattice polytope (possibly degenerate, i.e., of a lower dimension) with vertices in $G$. If $\bC$ is degenerate, then we take the intersection of an arbitrary $(d-1)$-hyperplane containing $\bC$ and the cube $[1, n]^d$ as its only facet. For each facet $\bF$ of $\bC$, write $F \eqdef \bF \cap \Z^d$ and $\Gamma_F \eqdef \spa(F) \cap \Z^d$. Recall that $\spa(F)$ is the affine span of $F$, which is the unique hyperplane containing $F$. By \Cref{prop:covol}, $\Gamma_F$ is a translate of a lattice of rank $d - 1$. Let $n_F \eqdef \covol(\Gamma_F)$. 
		
		Denote by $\area(\bullet)$ the $(d-1)$-dimensional Lebesgue measure. For any finite subset $S \subset \Gamma_F$, we define the normalized $(d-1)$-dimensional volume of $\conv(S)$ as $A_S \eqdef \frac{\area(\conv(S))}{n_F}$. Let $\cF$ be the set of all $F=\bF \cap \Z^d$, as $\bF$ ranges over all facets of~$\bC$. 
		
		\vspace{-1em}
		\paragraph{Establishing the intermediate bound.} We prove for all $F \in \cF$ the upper bound
		\begin{equation} \label{eq:face_bound}
			|C \cap F| = O_d \Bigl( A_{F}^{\frac{d-2}{d-1}} \log A_F \Bigr). 
		\end{equation}
		Call the set $L = \{a+kv : k \in [\ell]\}$ a \emph{lattice line segment} if $a, v \in \Z^d$ and $\ell \ge 2$. To prove \eqref{eq:face_bound}, we first analyze the behavior of $|C \cap L|$, and then pass to $|C \cap F|$ by a discrete John-type result. Such results can be found in \cite{tao_vu,berg_henk,hintum_keevash}. For our purpose, a weak version in \cite{barany_vershik} is enough.
		
		\begin{lemma} \label{lem:line_bound}
			Suppose $L \subset G$ is a lattice line segment. Then $|C \cap L| = O_d(\log|L|)$. 
		\end{lemma}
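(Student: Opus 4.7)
The plan is to transport $\widetilde{C}\cap\tau f(L)$ back to a $d$-oscillator that is an AP-restriction of $P$, apply the cup/cap decomposition, and exploit the thinness of $P$ to reduce matters to the two-dimensional case.

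First I would single out the ``dominant'' index $i_0\eqdef\min\{i:v_i\ne 0\}$, where $v$ is the direction of $L$, and rewrite
\[
f(a+kv) \;=\; S_{i_0}\bigl(p_{a_{i_0}+kv_{i_0}}\bigr) \;+\; c_{<} \;+\; R_k,
\]
where $c_{<}\eqdef\sum_{i<i_0}S_i(p_{a_i})$ is constant in $k$ and $R_k\eqdef\sum_{i>i_0}S_i(p_{a_i+kv_i})$ has norm $O_d(\eta_{i_0+1})$. The leading oscillatory contribution comes from the AP-restriction $P'\eqdef\{p_{a_{i_0}+kv_{i_0}}:k\in[|L|]\}$, which by \Cref{prop:oscillator}\ref{osci:subset} is itself a $d$-oscillator of cardinality $|L|$. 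Provided I choose the gauges $\eta_j$ for $j>i_0$ and the final general-position perturbation $\tau$ small enough in terms of $\eta_{i_0}$ and the robustness constant $\delta_0$ of \Cref{obs:perturbation} applied to $P'$, the set $P''\eqdef\bigl\{S_{i_0}^{-1}\bigl(\tau f(a+kv)-c_{<}\bigr):k\bigr\}$ is a $\delta_0$-perturbation of $P'$, hence still a $d$-oscillator. Since $S_{i_0}^{-1}$ and translation are affine, the image $\widetilde D\subset P''$ of $C\cap L$ is convex-independent of the same cardinality as $C\cap L$. Applying \Cref{prop:cupcap_decomp} writes $\widetilde D=D_A\cup D_B$ as a union of a convex cap and convex cup, so WLOG $|D_A|\ge|C\cap L|/2$.

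The main obstacle is to bound $|D_A|$ by $O_d(\log|L|)$, rather than the $O_d\bigl((\log|L|)^{d-1}\bigr)$ that would follow from a direct application of \Cref{thm:expstretch}. This sharpening must exploit the \emph{thinness} of $P$: by \Cref{thm:oscillator}, $P$ can be made an arbitrarily small $\veps$-perturbation of the line $\{ke_1\}$, and is constructed via \Cref{lem:osci_lifting} with oscillation amplitudes $1\gg\veps_2\gg\cdots\gg\veps_d$ that can be chosen to decay as rapidly as needed. My intended mechanism is to iteratively apply \Cref{thm:cupcap_intpoint} to $D_A$, peeling off one coordinate at each step so that the $\pi$-interior points of the current cap are forced into a single half of the halved oscillator; the hope is that the rapid decay of $\veps_{j+1}/\veps_j$ forces $|\partial_{\pi}D_A|$ to be only $O_d(1)$ at each level, rather than polynomially large as in the generic case, so the recursion improves to $\Delta'_{d'}(k)\ge 2\Delta'_{d'}\bigl(k-O_d(1)\bigr)$ and yields $k=O_d(\log|L|)$. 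The technical heart is verifying that thinness really does bound $|\partial_{\pi}|$ uniformly; once that is in place, after $d-2$ iterations one lands at a convex cap in the $2$-oscillator $\pi^{d-2}(P'')$, for which \Cref{thm:expstretch} with $d=2$ gives $\Delta'_2(k)\ge\tfrac12\exp(k/4)$, closing the bound.
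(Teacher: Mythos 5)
Your overall route --- transport $\widetilde C \cap \widetilde L$ by an affine map to a convex-independent subset of a $d$-oscillator that is (a small perturbation of) an arithmetic-progression restriction of $P$, and then invoke \Cref{thm:expstretch} --- is the same as the paper's. However, there are two concrete gaps.

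First, the perturbation estimate is incorrect. You claim $\|R_k\| = O_d(\eta_{i_0+1})$ where $R_k = \sum_{i > i_0} S_i\bigl(p_{a_i + k v_i}\bigr)$. But for $i > i_0$ with $v_i \neq 0$, the point $p_{a_i+kv_i}$ has first coordinate exactly $a_i + kv_i$, and $S_i = \sigma^{i-1}T_{\eta_i}$ is the identity on the first coordinate (it only scales coordinates $2,\dotsc,d$ before cyclically permuting); hence $S_i\bigl(p_{a_i+kv_i}\bigr)$ has a component of magnitude $|a_i+kv_i|$, which varies linearly in $k$ and can be of order $n$. Subtracting only the constant $c_<$ does not remove this drift, so $P''$ is not a small perturbation of $P'$ and \Cref{obs:perturbation} does not apply. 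The paper's proof handles exactly this by composing with the shear $\varphi(x_1,\dotsc,x_d) = (x_1,\dotsc,x_{i_0},\,x_{i_0+1}-\alpha_{i_0+1}x_{i_0},\dotsc,\,x_d-\alpha_d x_{i_0})$ with $\alpha_t = v_t/v_{i_0}$, which cancels the $k$-linear drift in the trailing coordinates before the cyclic shift is applied.

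Second, the proposed sharpening via ``thinness forces $|\partial_\pi D_A| = O_d(1)$'' is not justified and does not hold. The oscillator axioms, the quantity $\Delta_d$, and convex independence are all invariant under the anisotropic scalings $T_\eta$ (a diagonal linear map that preserves the $\pi_1$-ordering, the projection $\pi$, and the signs of height differences), so making $P$ thin changes none of the relevant combinatorics. Concretely, $\pi(\partial_\pi D_A)$ is a convex-independent subset of the $(d-1)$-oscillator $\pi(P'')$ with stretch at most $|L|$, and the $(d-1)$-dimensional case of \Cref{thm:expstretch} bounds its size only by $O_d\bigl((\log|L|)^{d-2}\bigr)$, not $O_d(1)$; iterating then reproduces the $(\log|L|)^{d-1}$ bound you were hoping to beat. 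You are right to observe that a direct application of \Cref{thm:expstretch} yields $|C\cap L| = O_d\bigl((\log|L|)^{d-1}\bigr)$ rather than the stated $O_d(\log|L|)$, but no sharpening is needed: in the proof of estimate~\eqref{eq:face_bound} one only needs $|C \cap F| \lesssim A_F^{\lambda}$ for some $\lambda$ strictly exceeding $\tfrac{d-2}{d-1}$, and any fixed power of $\log A_F$ is absorbed into the slack $\lambda - \tfrac{d-2}{d-1} > 0$. Finally, the cap/cup decomposition you invoke at the end is redundant; it is already built into \Cref{thm:expstretch} through \Cref{prop:Delta_bound}.
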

		
		\begin{proof}
			Suppose $|L| = \ell$ and write $L = \{a+kv : k \in [\ell]\}$, where $a = (a_1, \dotsc, a_d)$ and $v = (v_1, \dotsc, v_d)$. Let $m \in [d]$ be the smallest index such that $v_m \neq 0$ (so $v_1 = \dotsb = v_{m-1} = 0$). 
			
			The proof uses two affine transformations $\psi$ and $\psi'$ that are very close to each other. In fact, $\psi$ takes $L$ into a line segment that is parallel to the first coordinate axis; $\psi'$ maps $L'=f(L)$ to a small perturbation of $\psi(L)$, which itself is a small perturbation of $T_{\eta_m}(P)$. These properties imply that $\psi'(L')$ is a $d$-oscillator. An application of \Cref{thm:expstretch} then completes the proof.
			
			Recall that $p_k = (k, p_{k,2}, \dotsc, p_{k,d})$. We begin by defining $\psi$ and $\psi'$ in three steps.
			\vspace{-0.5em}
			\begin{itemize}
				\item Translate $L$ and $L'$ via 
				\begin{align*}
					\lambda(x)&\eqdef x-(a_1,\dotsc,a_{m-1},0,a_{m+1},\dotsc,a_d),\\
					\lambda'(x)&\eqdef x-\bigl(S_1(p_{a_1}) + \dotsb +  S_{m-1}(p_{a_{m-1}}) + S_{m+1}(p_{a_{m+1}}) + \dotsb + S_d(p_{a_d})\bigr),
				\end{align*}
				respectively. Note that the term $S_m(p_{a_m})$ is absent in the definition of $\lambda'(x)$. 
				\vspace{-0.5em}
				\item Align the line segment $\lambda(L)$ with the first coordinate axis via
				\[
				\varphi(x_1,\dotsc,x_m,x_{m+1},\dotsc,x_d)\eqdef (x_1,\dotsc,x_m,x_{m+1}-\alpha_{m+1}x_m,\dotsc,x_d-\alpha_dx_m), 
				\]
				where $\alpha_t\eqdef\frac{v_t}{v_m} \, (t = m+1, \dots, d)$. We use the same $\varphi$ to align $\lambda'(L')$. 
				\vspace{-0.5em}
				\item Construct transformations $\psi$ and $\psi'$ of $L$ and $L'$, respectively, by
				\begin{align*}
					\psi &\eqdef \sigma^{1-m} \circ \varphi \circ \lambda, \\
					\psi' &\eqdef \sigma^{1-m} \circ \varphi \circ \lambda'. 
				\end{align*}
			\end{itemize}
			\vspace{-0.5em}
			Observe that $\psi(L)$ is a line segment on the first coordinate axis, whereas $\psi'(L')$ consists of points
			\[
			T_{\eta_m}(p_i) + V_{n, d}(\eta_{m+1})
			\]
			as integer $i$ runs over the arithmetic progression $\{a_m + kv_m : k \in [\ell]\}$. Here $V_{n, d}(\eta_{m+1})$ is a vector of norm $O_{n, d}(\eta_{m+1})$. Let $\hat{L}$ consist of points of the form $T_{\eta_m}(p_i)$ as $i$ ranges over the same progression. From \Cref{prop:oscillator} applied to the $d$-oscillator $T_{\eta_m} (P)$, we see that $\hat{L} \subset T_{\eta_m}(P)$ is also a $d$-oscillator. 
			
			Recall that every $\delta$-perturbation of the $d$-oscillator $P$ is a $d$-oscillator, and every $\eta\delta$-perturbation of $T_{\eta}(P)$ is a $d$-oscillator. It is easily seen that every $\eta_m\delta$-perturbation of $\hat{L}$ can be extended to an $\eta_m\delta$-perturbation of $T_{\eta_m}(P)$ (e.g., perturb without moving any point from $T_{\eta_m} \setminus \hat{L}$). By applying \Cref{prop:oscillator} to the perturbed $T_{\eta_m}(P)$, we see that every $\eta_m\delta$-perturbation of $\hat{L}$ is a $d$-oscillator. 
			
			Choose $\eta_1 < \frac{\veps}{2}$ and $\eta_{i+1}$ in terms of $\eta_i$ for $i = 1, \dots, d-1$ so that the norm of the vector $V_{n, d}(\eta_{i+1})$ satisfies $|V_{n, d}(\eta_{i+1})| < \eta_i \delta$. Then $\psi'(L')$, being an $\eta_m \delta$-perturbation of $\hat{L}$, is a $d$-oscillator. Thus, we can choose the perturbation $\tau$ to be small enough so that $\psi'(\widetilde{L})$ is a sufficiently small perturbation of $\psi'(L')$ and hence again a $d$-oscillator, as per \Cref{obs:perturbation}. 
			
			Finally, since $\psi'(\widetilde{C}\cap \widetilde{L})$ is a convex subset of the $d$-oscillator $\psi'(\widetilde{L})$, from \Cref{thm:expstretch} we deduce that $\abs{\psi'(\widetilde{C}\cap \widetilde{L})}=O_d\bigl(\log \abs{\psi'(\widetilde{L})}\bigr)=O_d(\log |L|)$, and hence 
			$\abs{C\cap L}=\abs{\psi'(\widetilde{C}\cap \widetilde{L})}=O_d(\log |L|)$. 
		\end{proof}
		
		\begin{proof}[Proof of estimate \eqref{eq:face_bound}]
			Suppose $B$ is a basis for a lattice~$\Gamma$. Consider $\Z^B$, i.e., integer vectors indexed by~$B$. For any $\alpha, \beta \in \Z^B$, we define 
			\[
			P(B; \alpha, \beta) \eqdef \Biggl\{ \sum_{b\in B} \lambda_b b : \lambda_b \in [\alpha_b, \beta_b] \cap \Z \text{ for each }b\in B \Biggr\}. 
			\]
			For example, if $\Gamma = \Z^d$ and $B = \{e_1, \dots, e_d\}$ is the standard basis, then $\conv\bigl(P(B; \alpha, \beta)\bigr)$ is an axis-parallel box in $\R^d$ with integer vertices.
			We thus call $P(B; \alpha, \beta)$ a \emph{lattice parallelotope} in~$\Gamma$. 
			
			Fix an arbitrary basis $B = \{b_1, \dotsc, b_{d-1}\}$ of $\Gamma_F$. There exists a lattice parallelotope $P = P_F$ such that $F \subseteq P \subset \Gamma_F$ and $A_P = O_d(A_F)$, thanks to \cite[Theorem 3]{barany_vershik}. 
			
			Suppose $\alpha, \beta \in \Z^{d-1}$ are such that
			\[
			P = \Biggl\{ \sum_{i=1}^{d-1} \lambda_i b_i : \lambda_i \in [\alpha_i, \beta_i] \cap \Z \Biggr\}. 
			\]
			Then $A_P = \prod_{i=1}^{d-1} (\beta_i - \alpha_i)$. Since $A_F > 0$, and so $A_P > 0$, we have $\beta_i > \alpha_i$ for all $i$. Let $\ell_i \eqdef \beta_i - \alpha_i$; note that $\ell_i\geq 1$ because $\ell_i\in \Z$. Then $A_P = \prod_{i=1}^{d-1} \ell_i$. Without loss of generality, we may assume that $\ell_1 = \max\{\ell_1, \dotsc, \ell_{d-1}\}$. This implies
			\begin{equation} \label{eq:para_vol_dup}
				\prod_{i=2}^{d-1} (\ell_i+1) \le \prod_{i=2}^{d-1} (2\ell_i) \le 2^{d-1} \cdot \prod_{i=2}^{d-1} \Bigl( \ell_1^{\frac{1}{d-1}} \ell_i^{\frac{d-2}{d-1}} \Bigr) = 2^{d-1} \cdot A_P^{\frac{d-2}{d-1}}. 
			\end{equation}

			Partition $P$ into $\prod_{i=2}^{d-1} (\ell_i+1)$ lattice line segments that are parallel to the basis vector $b_1$. Then each of these line segments contains exactly $\ell_1+1$ points of~$\Z^d$.
			\Cref{lem:line_bound} implies that each such lattice segment $L$ satisfies $|C \cap L| = O_d(\log |L|) = O_d(\log \ell_1)$, and so 
			\[
			|C \cap F| \le |C \cap P| = \prod_{i=2}^{d-1} (\ell_i+1) \cdot O_d(\log \ell_1) \overset{(*)}{=} O_d \Bigl( A_P^{\frac{d-2}{d-1}} \Bigr) \cdot O_d(\log A_P) \overset{(**)}{=} O_d\Bigl(A_F^{\frac{d-2}{d-1}} \log A_F\Bigr), 
			\]
			where $(*)$ follows from \eqref{eq:para_vol_dup} and $\ell_1 \le \prod_{i=1}^{d-1} \ell_i = A_P$, whereas $(**)$ is implied by $A_P = O_d(A_F)$. 
		\end{proof}
		
		\vspace{-1em}
		\paragraph{Finishing the proof.} We need another technical lemma on $n_F$ to establish \Cref{thm:upperbound_specified}. 
		
		\begin{proposition} \label{prop:normvol_est}
			Let $\nu_k$ be the $k$-th smallest element of the multiset $\cN \eqdef \{n_F : F \in \cF\}$. Then we have $\nu_k = \Omega\bigl( k^{\frac{1}{d}} \bigr)$ for $k = 1, \dots, |\cF|$. 
		\end{proposition}
		
		\begin{proof}
			Define an auxiliary multiset
			\[
			\mathcal{U} \eqdef \biggl\{\sqrt{n_1^2+\dotsb+n_d^2} : (n_1, \dotsc, n_d) \in \Z^d, \, \gcd(n_1, \dotsc, n_d) = 1 \biggr\}. 
			\]
			Let $u_k$ be the $k$-th smallest element of $\mathcal{U}$. Then $u_1 = \dotsb = u_{2d} = 1, \, u_{2d+1} = \dotsb = u_{2d^2} = \sqrt{2}$. The reduced outer normal vectors (relative to $\bC$) of $\spa(F)$ are distinct for $F \in \cF$. \Cref{prop:covol} then implies that $\nu_k \ge u_k$ for $k = 1, \dots, |\cF|$. Hence, it suffices to establish $u_k = \Omega\bigl( k^{\frac{1}{d}} \bigr)$ for all $k \ge 1$. 
			
			To that end, we note that $r = \frac{1}{3}k^{\frac{1}{d}}$ and $k$ being sufficiently large together imply that
			\[
			|B_r \cap \Z^d| \le |(-r, r)^d \cap \Z^d| < (2r+1)^d < k.
			\]
			From this it follows that $u_k \ge r = \frac{1}{3} k^{\frac{1}{d}}$.
		\end{proof}
		
		\smallskip
		
		Finally, we are ready to show that $|\widetilde{C}| = O_d\Bigl( n^{\frac{d(d-1)}{d+1}} \Bigr)$. 
		
		\smallskip
		
		Write $A \lesssim B$ if $A = O_{d}(B)$. We claim that 
		\begin{equation} \label{eq:surfacearea_dup}
			\sum_{F \in \cF} A_F n_F \lesssim n^{d-1}. 
		\end{equation}
		If $\bC$ is degenerate, then $\bC$ has a unique facet $F_0$, which is the intersection of the cube $[1, n]^d$ and some $(d-1)$-dimensional hyperplane. The definitions of $A_{F_0}$ and $n_{F_0}$ imply that $A_{F_0} n_{F_0}$ is the area of $F_0$, which is obviously upper bounded by the surface area of $[1, n]^d$. It follows that
		\[
		\sum_{F \in \cF} A_F n_F = A_{F_0} n_{F_0} \le \area\bigl(\partial [1, n]^d\bigr) = 2d(n-1)^{d-1} \lesssim n^{d-1}. 
		\]
		Suppose $\bC$ is non-degenerate then. Since both $\bC$ and $[1, n]^d$ are convex and $\bC \subseteq [1, n]^d$, it follows from the monotonicity of intrinsic volumes (\cite[Theorem~6.13(iv)]{gruber_book}) that
		$\area\bigl(\partial \bC\bigr) \le \area\bigl(\partial [1, n]^d\bigr)$. Since $A_Fn_F$ is the area of $\bF=\conv(F)$ and $2d(n-1)^{d-1}$ is the area of $\partial [1, n]^d$, we have
		\[
		\sum_{F \in \cF} A_F n_F = \area\bigl(\partial \bC\bigr) \le \area\bigl(\partial [1, n]^d\bigr) = 2d(n-1)^{d-1} \lesssim n^{d-1}. 
		\]
		
		From \Cref{prop:normvol_est} combined with \eqref{eq:surfacearea_dup} we deduce that 
		\[
		|\cF|^{\frac{d+1}{d}} \lesssim \int_{0}^{|\cF|}x^{\frac{1}{d}}\,dx = \sum_{k=1}^{|\cF|} \int_{k-1}^k x^{\frac{1}{d}} \, dx \le \sum_{k=1}^{|\cF|} k^{\frac{1}{d}} \lesssim \sum_{F \in \cF} n_F \overset{(*)}{\le} d!\sum_{F \in \cF} A_F n_F \lesssim n^{d-1},
		\]
		where at $(*)$ we used the fact $d! A_F \ge (d-1)! A_F \ge 1$, a corollary of \Cref{prop:poly_normvol}. (Notice that $\bF$ is a $(d-1)$-dimensional polytope, no matter $\bC$ is degenerate or non-degenerate.) It follows that
		\begin{equation} \label{eq:f_bound_dup}
			|\cF| \lesssim n^{\frac{d(d-1)}{d+1}}. 
		\end{equation}
		
		Fix any real number $\lambda \in \bigl(\frac{d-2}{d-1}, \frac{d}{d+1}\bigr)$. Set $p \eqdef \lambda^{-1}$ and $q \eqdef (1-\lambda)^{-1}$. It follows from $0<\lambda<\frac{d}{d+1}$ that $0 < \frac{q\lambda}{d} < 1$, and hence 
		\begin{equation} \label{eq:sum_bound}
			\Biggl(\sum_{k=1}^{|\cF|} k^{-\frac{q\lambda}{d}}\Biggr)^{\frac{1}{q}} \le \Biggl( \sum_{k=1}^{|\cF|} \int_{k-1}^k x^{-\frac{q\lambda}{d}} \, dx \Biggr)^{\frac{1}{q}} = \biggl( \int_0^{|\cF|} x^{-\frac{q\lambda}{d}} \, dx \biggr)^{\frac{1}{q}} \lesssim \Bigl(|\cF|^{1 - \frac{q\lambda}{d}}\Bigr)^{\frac{1}{q}} = |\cF|^{1-\lambda\frac{d+1}{d}}. 
		\end{equation}
		Thus, we obtain
		\begin{align*}
			|C| &\le \sum_{F \in \cF} |C \cap F| && \text{because of $\conv_0(C) \cap C = \varnothing$} \\
			&\lesssim \sum_{F \in \cF} A_F^{\lambda} = \sum_{F \in \cF} (A_Fn_F)^{\lambda} \cdot n_F^{-\lambda} && \text{using \eqref{eq:face_bound} and $\lambda > \tfrac{d-2}{d-1}$} \\
			&\le \Biggl( \sum_{F \in \cF} (A_Fn_F)^{p\lambda} \Biggr)^{\frac{1}{p}} \cdot \Biggl( \sum_{F \in \cF} n_F^{-q\lambda} \Biggr)^{\frac{1}{q}} && \text{by H\"{o}lder's inequality} \\
			&\lesssim \Biggl( \sum_{F \in \cF} A_Fn_F \Biggr)^\frac{1}{p} \cdot \Biggl( \sum_{k=1}^{|\cF|} k^{-\frac{q\lambda}{d}} \Biggr)^{\frac{1}{q}} && \text{by \Cref{prop:normvol_est} and $-q\lambda < 0$} \\
			&\lesssim \bigl( n^{d-1} \bigr)^{\lambda} \cdot \Bigl(|\cF|^{1-\lambda\frac{d+1}{d}}\Bigr) && \text{using \eqref{eq:surfacearea_dup} and \eqref{eq:sum_bound}} \\
			&\lesssim \Bigl( n^{\frac{d(d-1)}{d+1}} \Bigr)^{\lambda\frac{d+1}{d}} \cdot \Bigl( n^{\frac{d(d-1)}{d+1}} \Bigr)^{1-\lambda\frac{d+1}{d}} &&\text{using \eqref{eq:f_bound_dup} and $\lambda < \tfrac{d}{d+1}$} \\
			&= n^{\frac{d(d-1)}{d+1}}. 
		\end{align*}
		Since $|\widetilde{C}| = |C|$, the proof of \Cref{thm:upperbound_specified} is complete. 
	\end{proof}

\appendix 
\section*{Appendix}
\section{The number of bad points is finite} \label{append:bad}
	
	Observe that $\pi(P_{i+1}')$ is automatically regular since $P^*$ is regular. From \ref{regular:axis_inj_dup} and \ref{regular:P_gen_dup} we deduce that there are three kinds of possible reasons behind a point $p' \in \ipi(p_{i+1}^*)$ being bad: 
	\vspace{-0.5em}
	\begin{enumerate}[label=(B\arabic*), ref=(B\arabic*)]
		\item \label{bad:height} The height of $p'$ happens to be the same as some $p_j'$ for $j = 1, \dots, i$. 
		\vspace{-0.5em}
		\item \label{bad:general} The point $p'$ lies on the $(d-1)$-dimensional hyperplane spanned by points $p_{j_1}', \dots, p_{j_d}' \in P_i'$. 
		\vspace{-0.5em}
		\item \label{bad:generic} There exist disjoint subsets $S, T$ of $P_i'$ with $|S| \ge 1, \, |T| \ge 1$ and $|S| + |T| = d+1$ such that 
		\[
		\lvert \spa(S) \cap \spa(T \cup \{p'\}) \rvert \neq 1. 
		\]
		We do not include $|T| = 0$ because in that case $\spa(S) = \R^d$ and $\lvert \spa(S) \cap \spa(\{p'\}) \rvert = 1$. 
	\end{enumerate}
	\vspace{-0.5em}
	
	Firstly, any bad point from \ref{bad:height} is (for some $j$) the intersection of the horizontal hyperplane 
	\[
	\{x \in \R^d : h(x) = h(p_j')\}
	\]
	and the vertical line $\ipi(p_{i+1}^*)$. Obviously, the number of such bad points is finite. 
	
	Secondly, we claim that the hyperplane $\spa(\{p_{j_1}', \dots, p_{j_d}'\})$ is not parallel to $e_d = (0, \dots, 0, 1)$. This is seen by the fact that $\pi(p_{j_1}'), \dots, \pi(p_{j_d}')$ span $\R^{d-1}$, since $\pi(P_i) \subset \R^{d-1}$ is regular hence in general position. The claim then implies that the number of bad points from \ref{bad:general} is finite. 
	
	Thirdly, suppose $|S| \eqdef s, \, |T| \eqdef t$, and $S = \{u_1, \dots, u_s\}, \, T \cup \{p'\} = \{v_1, \dots, v_t, v_{t+1} = p'\}$. Then $\spa(S) = u_1 + U, \, \spa(T) = v_1 + V$, where $U$ and $V$ denote the linear subspaces of $\R^d$ generated by $\wu_2 \eqdef u_2-u_1, \dots, \wu_s \eqdef u_s-u_1$ and $\wv_2 \eqdef v_2-v_1, \dots, \wv_t \eqdef v_t-v_1$, respectively. Also, we denote $\wv_{t+1} \eqdef v_{t+1} - v_1$ and $V' \eqdef \wv_{t+1} + V$. It follows from $P_i'$ is regular (hence in general position) that $\dim(U) = s-1$ and $\dim(V) = t-1$. Moreover, since $P^*$ is regular, and so in general position, we deduce that $\wv_{t+1} \notin \spa(\wv_1, \dots, \wv_t)$. So, by noticing $\dim(U) + \dim(V') = (s-1) + t = d$ we obtain
	\begin{equation} \label{eq:generic}
		\bigl\lvert \spa(S) \cap \spa(T \cup \{p'\}) \bigr\rvert = 1 \iff U \cap V' = \{0\} \iff \R^d = U \oplus V'. 
	\end{equation}
	Similarly, from $P^*$ is regular we deduce that $\dim\bigl(\pi(U)\bigr) = s-1, \, \dim\bigl(\pi(V)\bigr) = t-1$, and hence
	\begin{equation} \label{eq:pi_generic}
		\bigl\lvert \spa\bigl(\pi(S)\bigr) \cap \spa\bigl(\pi(T)\bigr) \bigr\rvert = 1 \iff \pi(U) \cap \pi(V) = \{0\} \iff \R^{d-1} = \pi(U) \oplus \pi(V). 
	\end{equation}
	Let $W$ be the linear subspace $\spa(\wu_2, \dots, \wu_s, \wv_2, \dots, \wv_t)$. Since $P_i'$ is regular, from \eqref{eq:generic} we see that
	\[
	\text{$p'$ is bad because of \ref{bad:generic}} \iff \R^d \neq U \oplus V' \iff \wv_{t+1} \in W \iff p' \in v_1 + W. 
	\]
	It then suffices to prove that $e_d \notin W$, as this implies that the number of bad points from \ref{bad:generic} is finite, which completes the proof. Suppose to the contrary that $e_d \in W$. Upon taking the projection $\pi$, the $d-1$ vectors $\pi(\wu_2), \dots, \pi(\wu_s), \pi(\wv_2), \dots, \pi(\wv_t) \in \R^{d-1}$ are linearly dependent. This implies that $\pi(U) \oplus \pi(V) \neq \R^{d-1}$, a contradiction to \eqref{eq:pi_generic}. 
	
	\section{Proof of \texorpdfstring{\Cref{prop:covol}}{Proposition~19}} \label{append:covol}
	\paragraph{Proof that \texorpdfstring{$\Gamma$}{Gamma} is a translate of a lattice of rank \texorpdfstring{$d-1$}{d-1}.}
	The vector $\hat{n} \eqdef (n_1, \dots, n_d)$ is a normal vector to the hyperplane $\sigma$. Note that $|\hat{n}| = \sqrt{n_1^2 + \dots + n_d^2}$. We recall the definition of $\Gamma$ and define an auxiliary lattice $\Gamma_0$ of rank $d-1$ as follows: 
	\begin{align*}
		\Gamma &\eqdef \{ x \in \Z^d : \langle \hat{n}, x \rangle = n'\}, \\
		\Gamma_0 &\eqdef \{ x \in \Z^d : \langle \hat{n}, x \rangle = 0\}. 
	\end{align*}
	Since $\gcd(n_1, \dots, n_d) = 1$, there exists $y = (y_1, \dots, y_d) \in \Z^d$ such that $\langle \hat{n}, y \rangle = n_1y_1 + \dots + n_dy_d = 1$. This implies that $z \eqdef n'y \in \Gamma$. We claim that $\Gamma = \Gamma_0 + z$. To see this, we observe that
	\begin{itemize}
		\item $\langle \hat{n}, u-z \rangle = 0$ holds for any $u \in \Gamma$, which implies $u-z \in \Gamma_0$; and
		\item $\langle \hat{n}, v+z \rangle = n'$ holds for any $v \in \Gamma_0$, which implies $v+z \in \Gamma$.
	\end{itemize}
	Because $\Gamma = \Gamma_0 + z$, it follows that $\Gamma$ is a translate of a lattice of rank $d-1$.
	
	\vspace{-1em}
	\paragraph{Proof that \texorpdfstring{$\covol(\Gamma_0)=|\hat{n}|.$}{covol(Gamma\_0)=|n-hat|}}
	We may assume without loss of generality that $n' = 0$, and so $\Gamma = \Gamma_0$.
	
	We first establish that $\covol(\Gamma) \ge |\hat{n}|$. As a lattice of rank $d-1$, suppose that $\Gamma$ is generated by basis vectors $b_1, \dots, b_{d-1}$. The strategy is to define an integer vector $\hat{b} \in \Z^d$ which is orthogonal to $b_1, \dots, b_{d-1}$, hence parallel to $\hat{n}$. Then the condition $\gcd(n_1, \dots, n_d) = 1$ implies that $|\hat{n}|$ divides $|\hat{b}|$, and hence $|\hat{b}| \ge |\hat{n}|$. Finally, we conclude by establishing $\covol(\Gamma) = |\hat{b}|$.
	
	Think about $b_1, \dots, b_{d-1}$ as row vectors and write $b_i \eqdef (b_{i, 1}, \dots, b_{i, d-1})$ for $i = 1, \dots, d-1$. Set
	\[
	B \eqdef 
	\begin{pmatrix}
		b_1 \\
		\vdots \\
		b_{d-1}
	\end{pmatrix}
	= 
	\begin{pmatrix}
		b_{1,1} & \cdots & b_{1,d} \\
		\vdots & \ddots & \vdots \\
		b_{d-1,1} & \cdots & b_{d-1,d}
	\end{pmatrix}. 
	\]
	For $i = 1, \dots, d$, let $B_i$ be the $(d-1) \times (d-1)$ matrix obtained by deleting the $i$-th column of $B$. Set $\lambda_i \eqdef (-1)^{i+1} \det(B_i)$ and $\hat{b} \eqdef \lambda_1 e_1 + \dots + \lambda_d e_d$, where $e_1, \dots, e_d$ are the standard basis vectors. We remark that the vector $\hat{b}$ constructed is the \emph{exterior product} of $b_1, \dots, b_{d-1}$.
	
	We claim that $\hat{b}$ is orthogonal to $b_i$ for each $i = 1, \dots, d-1$. Indeed, we have
	\[
	\langle \hat{b}, b_i \rangle = \lambda_1 b_{i, 1} + \dots + \lambda_d b_{i, d} = \det
	\begin{pmatrix}
		b_{i, 1} & \cdots & b_{i, d} \\
		b_{1, 1} & \cdots & b_{1, d} \\
		\vdots & \ddots & \vdots \\
		b_{d-1, 1} & \cdots & b_{d-1, d}
	\end{pmatrix}
	= 0. 
	\]
	It then suffices to show that $\covol(\Gamma) = |\hat{b}|$. To see this, we compute
	\[
	\covol(\Gamma) \overset{(*)}= \sqrt{\det(BB^{\top})} \overset{(**)}{=} \sqrt{\sum_{i=1}^d \det(B_iB_i^{\top})} = \sqrt{\sum_{i=1}^d \det(B_i)^2} = \sqrt{\sum_{i=1}^d \lambda_i^2} = |\hat{b}|, 
	\]
	where at the step $(**)$ we used the Cauchy--Binet formula. As for the step $(*)$, it can be deduced from basic Riemannian Geometry (see \cite[Proposition~15.31]{lee}, for instance). Here we also include its elementary proof: We begin by proving $(*)$ in the special case when $b_1, \dots, b_{d-1} \in \spa(e_1, \dots, e_{d-1})$ and hence $b_{1, d} = \dots = b_{d-1, d} = 0$. Recall that $B_d$ is the $(d-1) \times (d-1)$ matrix
	\[
	B_d =
	\begin{pmatrix}
		b_{1,1} & \cdots & b_{1,d-1} \\
		\vdots & \ddots & \vdots \\
		b_{d-1,1} & \cdots & b_{d-1,d-1}
	\end{pmatrix}. 
	\]
	Write $\bm{0} \eqdef (0, \dots, 0) \in \Z^d$. Then the $(d-1) \times d$ matrix $B$ is equal to $(B_d \ \bm{0}^{\top})$, and hence we obtain
	\[
	\covol(\Gamma) = \abs{\det(B_d)} = \sqrt{\det(B_d^{} B_d^{\top})} = \sqrt{\det\bigl( (B_d \ \bm{0}^{\top}) (B_d \ \bm{0}^{\top})^{\top} \bigr)} = \sqrt{\det(BB^{\top})}.
	\]
	For general vectors $b_1, \dotsc, b_{d-1} \in \Z^d$, pick any orthogonal transformation $M$ sending $b_1, \dots, b_{d-1}$ to the subspace $\spa(e_1, \dots, e_{d-1})$. Then
	\begin{align*}
		\covol(\Gamma) = \covol(\Gamma M) &= \sqrt{\det \bigl( (BM) (BM)^{\top} \bigr)} = \sqrt{\det(BB^{\top})}. 
	\end{align*}
	
	\smallskip
	
	We next show that $\covol(\Gamma) \le |\hat{n}|$. The strategy is to establish that $\covol(\Gamma)$ divides $n_i^{d-2} |\hat{n}|$ for every $i = 1, \dots, d$. It follows from $\gcd(n_1, \dots, n_d) = 1$ that $\gcd\bigl(n_1^{d-2}, \dots, n_d^{d-2}\bigr) = 1$, and so there exist $m_1, \dots, m_d \in \Z$ such that $m_1n_1^{d-2} + \dots + m_dn_d^{d-2} = 1$. This then implies that $\covol(\Gamma)$ divides
	\[
	m_1n_1^{d-2}|\hat{n}| + \dots + m_dn_d^{d-2}|\hat{n}| = \bigl(m_1n_1^{d-2} + \dots + m_dn_d^{d-2}\bigr) \cdot |\hat{n}| = |\hat{n}|. 
	\]
	We thus obtain $\covol(\Gamma) \le |\hat{n}|$. 
	
	It then suffices to verify that $\covol(\Gamma)$ divides $n_1^{d-2}|\hat{n}|$, since by symmetry this will imply that $\covol(\Gamma)$ divides each $n_i^{d-2}|\hat{n}|$. For $j = 2, \dots, d$, consider the vector $v_j \eqdef n_je_1 - n_1e_j$. It is easily seen that $v_j$ is orthogonal to $\hat{n}$. Notice that $v_j\in \Gamma$ (since we assumed $n' = 0$). Let $\cP$ be the parallelotope generated by $v_2, \dots, v_d$. Since the vectors generating $\cP$ are in $\Gamma$, we obtain $\covol(\Gamma)$ divides $\vol(\cP)$, the $(d-1)$-dimensional volume of $\cP$. Let $\cP'$ be the $d$-dimensional parallelotope obtained by extruding $\cP$ by the vector $\hat{n}$, and denote by $\vol(\cP')$ its $d$-dimensional volume. Then
	\[
	|\hat{n}| \cdot \vol(\cP) = \vol(\cP') = \left\lvert \det 
	\begin{pmatrix}
		n_1 & n_2 & n_3 & \cdots & n_d \\
		n_2 & -n_1 & 0 & \cdots & 0 \\
		n_3 & 0 & -n_1 & \cdots & 0 \\
		\vdots & \vdots & \vdots & \ddots & \vdots \\
		n_1 & 0 & 0 & \cdots & -n_1
	\end{pmatrix}
	\right\rvert = |n_1|^{d-2} (n_1^2 + \dots + n_d^2), 
	\]
	where the determinant is computed inductively by expanding along the last column. From the fact $|\hat{n}|^2 = n_1^2 + \dots + n_d^2$ we deduce that $\vol(\cP) = |n_1|^{d-2}|\hat{n}|$. Thus, $\covol(\Gamma)$ divides $|n_1|^{d-2}|\hat{n}|$.
	
	\smallskip
	
	The proof of \Cref{prop:covol} is complete. 

\section*{Acknowledgments} 
We thank Adrian Dumitrescu and Csaba D.\ T\'oth for useful comments about the previous version of the paper, and in particular for alerting us of an error therein. The parameter $\lambda$ in the proof of \Cref{thm:upperbound_specified} was not in the interval $\bigl(\frac{d-2}{d-1}, \frac{d}{d+1}\bigr)$. The second author thanks Yisai Xue for assistance during the revision of this paper, and Minghui Ouyang for helpful discussions on \Cref{append:covol}. 

\bibliographystyle{plain}
\bibliography{res_es}

\begin{aicauthors}
\begin{authorinfo}[bukh]
  Boris Bukh\\
  Department of Mathematical Sciences, Carnegie Mellon University\\
  Pittsburgh, USA\\
  bbukh\imageat{}math\imagedot{}cmu\imagedot{}edu \\
  \url{http://www.borisbukh.org}
\end{authorinfo}
\begin{authorinfo}[dong]
  Zichao Dong\\
  Extremal Combinatorics and Probability Group (ECOPRO), Institute for Basic Science (IBS)\\
  Daejeon, South Korea\\
  zichao\imageat{}ibs\imagedot{}re\imagedot{}kr \\
  \url{https://dzch0310.github.io/dzch0310/}
\end{authorinfo}
\end{aicauthors}

\end{document}